\documentclass[12pt,a4paper]{article}
\usepackage{amsmath}
\usepackage{amssymb}
\usepackage{amsthm}

\usepackage{verbatim}

\usepackage[utf8]{inputenc}


\usepackage{graphicx}
\usepackage{color}
\usepackage{enumerate}

\usepackage{esint}
\usepackage[all]{xy}
\usepackage{framed}
\usepackage{bbm}
\usepackage{subcaption}

\usepackage{bbm}

\theoremstyle{plain}
\newtheorem{theorem}{Theorem}
\newtheorem{corollary}[theorem]{Corollary}
\newtheorem{lemma}{Lemma}[section]
\newtheorem{claim}{Claim}[section]
\newtheorem{proposition}{Proposition}[section]

\theoremstyle{definition}
\newtheorem{definition}{Definition}[section]
\newtheorem{remark}{Remark}[section]
\newtheorem{example}{Example}[section]
\numberwithin{equation}{section}

\newcommand{\bfa}{\mathbf{a}}

\newcommand{\bfp}{\mathbf{p}}
\newcommand{\bfe}{\mathbf{e}}

\newcommand{\D}{\mathcal{D}}
\newcommand{\cK}{\mathcal{K}}

\newcommand{\cH}{\mathcal{H}}

\newcommand{\cE}{\mathcal{E}}

\newcommand{\dd}{\mathrm{\,d}}
\newcommand{\hx}{\hat{x}}

\newcommand{\mR}{\mathbb{R}}

\newcommand{\oR}{\overline{\mR}}
\newcommand{\oF}{\overline{F}}

\newcommand{\bben}{\mathbbm{1}}

\DeclareMathOperator{\tr}{tr}
\DeclareMathOperator{\diag}{diag}

\DeclareMathOperator{\di}{div}

\DeclareMathOperator{\dist}{dist}
\DeclareMathOperator{\con}{conv}

\DeclareMathOperator{\rot}{rot}

\DeclareMathOperator{\ac}{ac}
\DeclareMathOperator{\inter}{int}
\DeclareMathOperator{\clos}{cl}

\begin{document}

\author{Karl K. Brustad}
\title{Sobolev gradients of viscosity supersolutions}
\maketitle

\begin{abstract}
\noindent
We investigate which elliptic PDEs that have the property that every viscosity supersolution is $W^{1,q}_{loc}(\Omega)$, $\Omega\subseteq\mR^n$.
The asymptotic cone of the operator's sublevel set seems to be essential. It turns out that much can be said if we know how this cone  compares to the sublevel set of a certain minimal operator associated with the exponent $q$.
\end{abstract}

\section{Introduction}
A viscosity supersolution of an elliptic equation is, a priori, no more regular than lower semicontinuous.
Its definition does not require any differentiability.
However, some equations are known to impose a regularity on their supersolutions.
For example,
superharmonic functions have weak gradients that are locally integrable provided the exponent is sufficiently small. If $u$ is a supersolution to the Laplace equation $\Delta u = 0$ in some open set $\Omega\subseteq \mR^n$, $n\geq 2$, then
\[\int_D|\nabla u|^q\dd x < \infty\]
whenever
\[0 < q < \frac{n}{n-1}\]
for every compact subset $D$ of $\Omega$.
That is, $u$ belongs to the Sobolev space $W^{1,q}_{loc}(\Omega)$. The fundamental solution $x\mapsto |x|^{2-n}$, or $-\ln|x|$ in the case $n=2$, shows that the bound on $q$ is sharp. This result is generalized to $p$-superharmonic functions in \cite{MR3931688}. The exponent $q$ can be increased when $p>2$. More precisely, if $u$ is a supersolution to the $p$-Laplace equation
\begin{equation}
\Delta_p u := \di(|\nabla u|^{p-2}\nabla u) = 0
\label{eq:plap}
\end{equation}
for some $2\leq p\leq n$, then $u\in W^{1,q}_{loc}(\Omega)$ for every $q$ such that
\[0 < q < \frac{n}{n-1}(p-1).\]
Again the bound is sharp, as confirmed by the fundamental solution $x\mapsto |x|^\frac{p-n}{p-1}$ -- or $-\ln|x|$ in the case $p=n$.\footnote{The viscosity supersolutions of \eqref{eq:plap} coincide with the traditional $p$-superharmonic functions defined by differentiating test functions under the integral sign. See \cite{MR1871417}.}

One may wonder what it is that characterizes a PDE that has every supersolution in some first order Sobolev space.
In this paper we begin the investigation by considering equations that depend only on the second order partial derivatives
\[\cH u := \left[\frac{\partial^2 u}{\partial x_i\partial x_j}\right]_{i,j}.\]
Given an exponent $q$, we ask the following question. For which operators $F\colon S(n)\to\mR$ is every viscosity supersolution of the equation
\[F(\cH u) = 0\qquad\text{in $\Omega$}\]
in $W^{1,q}_{loc}(\Omega)$?
To our knowledge, this particular problem has not been addressed before.

Our theorems are presented in the next Section. Sufficient and necessary conditions are established in Theorem \ref{thm:suff} and Theorem \ref{thm:nec}, respectively. The \emph{dominative $p$-Laplacian}
\[\D_p u := \Delta u + (p-2)\lambda_n(\cH u)\]
will play a prominent role in the characterization. It was introduced in \cite{MR4085709} in order to explain a superposition principle in the $p$-Laplace equation \eqref{eq:plap}. The key property in that setting was its domination
\begin{equation}
|\nabla u|^{2-p}\Delta_p u \leq \D_p u
\label{eq:domination}
\end{equation}
over the normalized $p$-Laplacian.
In the present situation, a sort of opposite property will also be of importance.
It is shown in Section \ref{sec:risee} that $\D_p$ is \emph{minimal} in the class of sublinear elliptic operators that share its fundamental solution \eqref{sublin_fundsol}.
This is a result of independent interest and we consider it as one of the main contributions of this paper.
We shall also make use of a tool developed in \cite{brustad2020comparison}. Some relevant properties of the \emph{associated consistent distance operator} are proved in Section \ref{sec:acdo}.

The definition and the elementary theory of viscosity solutions can be found in \cite{MR1118699}.
A function $u\in C^2(\Omega)$ is a viscosity supersolution if and only if $F(\cH u(x))\leq 0$ for all $x\in\Omega$.

\section{The main theorems}

In order to start the search for operators having only $W^{1,q}_{loc}$ supersolutions, we make some observations. Firstly, the properties we are looking for must depend only on the sublevel set
\[\Theta = \Theta(F) := \left\{X\in S(n)\;|\; F(X)\leq 0\right\}\]
in the space $S(n)$ of symmetric $n\times n$ matrices. Indeed, if two operators have the same sublevel set, then they also share the same set of supersolutions. Secondly, the properties should be invariant under translations of $\Theta$. This is because a function $u$ is a supersolution to $F(\cH u) = 0$ if and only if $v(x) := u(x) + \frac{1}{2}x^\top X_0x$ is a supersolution to the equation $F(\cH v - X_0) = 0$. The sublevel set of $X\mapsto F(X-X_0)$ is the translation $\Theta + \{X_0\}$, and $u$ and $v$ are clearly in the same Sobolev space. Also, a linear transformation of the form
\[B^\top \Theta B := \{B^\top XB\;|\; X\in\Theta\}\]
where $B$ is an invertible $n\times n$ matrix, should not matter: If $u$ is a supersolution to $F(\cH u) = 0$, define the function $v(x) := u(Bx)$. Then $\cH v(x) = B^\top \cH u(Bx)B$ and $v$ is a supersolution to the equation $F(B^{-\top}\cH v B^{-1}) = 0$ with sublevel set $B^\top \Theta B$. Again, $u\in W^{1,q}_{loc}$ if and only if $v\in W^{1,q}_{loc}$. Admittedly, we conducted this argument as if $u$ and $v$ were twice differentiable, but, as we shall see, the reasoning is sound because we can do the computations on the test functions.
Finally, if a supersolution is not smooth, one can suspect that its Hessian matrix has to run off to infinity in some direction in $\Theta\subseteq S(n)$. It is perhaps only the shape of $\Theta$ \emph{for large} $\|X\|$ that is significant for whether a supersolution is in $W^{1,q}_{loc}$ or not. We use the \emph{asymptotic cone}
\[\ac(\Theta) := \left\{ Z\in S(n)\;\middle|\; \exists t_k\to\infty,\,\exists X_k\in\Theta\text{ with } \lim_{k\to\infty}\frac{X_k}{t_k} = Z \right\}\]
to capture the behavior of $\Theta$ at infinity.

The dominative $p$-Laplacian $\D_p\colon C^2(\Omega)\to C(\Omega)$ can be written as $\D_p u(x) = F_p(\cH u(x))$ where $F_p\colon S(n)\to\mR$ is given by $F_p(X) = \tr X + (p-2)\lambda_n(X)$.
For computational convenience we shall in this paper multiply the operator with a practical, but otherwise insignificant, scaling constant. We define
\begin{align*}
F_p(X) &:= \tfrac{1}{n+p-2}\Big(\tr X + (p-2)\lambda_n(X)\Big)&&\text{for $2\leq p<\infty$, and}\\
F_\infty(X) &:= \lambda_n(X),&&\text{the largest eigenvalue of $X$.}
\end{align*}
The normalization makes
\[F_p(X + mI) = F_p(X) + m\]
for all $p\in[2,\infty]$, $X\in S(n)$, and $m\in\mR$.
We let
\[\Theta_p := \Theta(F_p) = \{X\in S(n)\;|\; F_p(X)\leq 0\}\]
denote the sublevel set of $F_p$. It can be verified that $\Theta_p$ is a closed convex cone in $S(n)$. At $p=\infty$,
\[\Theta_\infty = \{X\in S(n)\;|\; \lambda_n(X)\leq 0\} = \{X\in S(n)\;|\; X\leq 0\}\]
is the set $S_-(n)$ of negative semidefinite matrices. When $p$ decreases, the cone $\Theta_p$ gradually opens, and eventually flattens out to the half-space
\[\Theta_2 = \{X\in S(n)\;|\; \tr X\leq 0\} = \{X\in S(n)\;|\; \langle I,X\rangle\leq 0\}.\]
See Figure \ref{fig:good}. For $2\leq p'<p\leq\infty$, one can check that
\begin{equation}
\Theta_\infty\subseteq \Theta_p\subseteq \Theta_{p'}\subseteq \Theta_2\qquad \text{and}\qquad \partial \Theta_p\cap \partial \Theta_{p'} = \{0\}.
\label{eq:nesting}
\end{equation}

As an immediate consequence of \eqref{eq:domination}, a function $u\colon\Omega\to(-\infty,\infty]$ is $p$-superharmonic whenever it is dominative $p$-superharmonic (\cite{MR4085709}). Thus, if we can find an invertible $B$ and an $X_0\in S(n)$ such that
\begin{equation}
B^\top \Theta B - \{X_0\}\subseteq \Theta_p
\label{eq:suffcond}
\end{equation}
for some $2\leq p\leq n$, then every supersolution $u$ of $F(\cH u) = 0$ is, by a change of variables and by subtracting a quadratic, $p$-superharmonic. We conclude that $u\in W^{1,q}_{loc}(\Omega)$ for every $0<q<\frac{n}{n-1}(p-1)$ by Theorem 5.18 in \cite{MR3931688}. We shall show that
\begin{equation}
\ac(B^\top \Theta B)\subseteq \Theta_p
\label{eq:suffcond2}
\end{equation}
is a strictly weaker condition than \eqref{eq:suffcond}, but still sufficient in order to ensure $u\in W^{1,q}_{loc}(\Omega)$ in the case $2<p\leq n$. 

We find it rather interesting that \eqref{eq:suffcond2} turns out to also be \emph{necessary} under some common assumptions on $F$.

\begin{theorem}[Sufficient condition]\label{thm:suff}
Let $p\in(2,n]$, $\Omega\subseteq\mR^n$ be open, and let $\Theta\subseteq S(n)$ be the sublevel set of an operator $F\colon S(n)\to\oR$.\footnote{$\Theta$ is well-defined even if $F$ takes values in the extended real line $\oR := \mR\cup\{\pm\infty\}$.}

If there is an invertible $n\times n$ matrix $B$ such that
\[\ac(B^\top \Theta B)\subseteq \Theta_p\]
then every viscosity supersolution of
\[F(\cH u) = 0\qquad\text{in $\Omega$}\]
is $W^{1,q}_{loc}(\Omega)$ for all $q$ such that
\[0 < q < \frac{n}{n-1}(p-1).\]
\end{theorem}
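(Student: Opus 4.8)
The plan is to deduce the theorem from the already‑known Sobolev regularity of $p'$‑superharmonic functions, for a suitable $p'\in(2,p)$, by cashing in the invariances recorded at the beginning of this section. I would first fix an exponent $q$ with $0<q<\frac{n}{n-1}(p-1)$; since $p'\mapsto\frac{n}{n-1}(p'-1)$ is continuous and increasing, one may choose $p'\in(2,p)$ so close to $p$ that still $q<\frac{n}{n-1}(p'-1)$, and it then suffices to prove $u\in W^{1,q}_{loc}(\Omega)$ for this single $q$.

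Next I would run the reductions. Put $\Theta':=B^\top\Theta B$, which is the sublevel set of $G(X):=F(B^{-\top}XB^{-1})$, so that the hypothesis reads $\ac(\Theta')\subseteq\Theta_p$. Checking on test functions, $v(x):=u(Bx)$ is a viscosity supersolution of $G(\cH v)=0$ on $B^{-1}\Omega$, with $u\in W^{1,q}_{loc}(\Omega)$ iff $v\in W^{1,q}_{loc}(B^{-1}\Omega)$. The whole matter then reduces to the geometric claim that there is a constant $m_0\geq0$ with $\Theta'\subseteq\{X\in S(n): F_{p'}(X)\leq m_0\}$, equivalently $\Theta'-\{m_0 I\}\subseteq\Theta_{p'}$. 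Granting it, I would set $w(x):=v(x)-\tfrac{m_0}{2}|x|^2$; a test‑function computation then shows $w$ is a viscosity supersolution of $F_{p'}(\cH w)=0$, i.e.\ $w$ is dominative $p'$‑superharmonic, and $w$ has the same local Sobolev regularity as $v$ because the subtracted quadratic is smooth.

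The main obstacle is this geometric claim, and it is precisely here that the asymptotic‑cone hypothesis — strictly weaker than \eqref{eq:suffcond} — is shown to still suffice. The key point is that the strict nesting \eqref{eq:nesting}, $\Theta_p\subseteq\Theta_{p'}$ with $\partial\Theta_p\cap\partial\Theta_{p'}=\{0\}$, forces $\Theta_p\setminus\{0\}\subseteq\inter\Theta_{p'}$ since $\Theta_p,\Theta_{p'}$ are closed convex cones, while $\inter\Theta_{p'}=\{X:F_{p'}(X)<0\}$ by the normalization of $F_{p'}$. I would then argue by contradiction: were $F_{p'}$ unbounded above on $\Theta'$, pick $X_k\in\Theta'$ with $F_{p'}(X_k)\to\infty$; continuity of $F_{p'}$ forces $t_k:=\|X_k\|\to\infty$, and after passing to a subsequence $X_k/t_k\to Z$ with $\|Z\|=1$, so $Z\in\ac(\Theta')\subseteq\Theta_p$ and hence $F_{p'}(Z)<0$. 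By continuity $F_{p'}(X_k/t_k)<\tfrac12 F_{p'}(Z)<0$ for large $k$, and the positive $1$‑homogeneity of $F_{p'}$ (immediate from its formula) gives $F_{p'}(X_k)=t_k F_{p'}(X_k/t_k)\to-\infty$ — a contradiction. This is exactly the statement that, after subtracting the quadratic, we are back in the situation of \eqref{eq:suffcond}, now with $p'$ in place of $p$ and $B=I$.

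Finally I would conclude along the lines already sketched above: by the domination \eqref{eq:domination} (from \cite{MR4085709}), the dominative $p'$‑superharmonic function $w$ is $p'$‑superharmonic, and since $2\leq p'<p\leq n$, Theorem~5.18 of \cite{MR3931688} gives $w\in W^{1,q}_{loc}(B^{-1}\Omega)$ because $q<\frac{n}{n-1}(p'-1)$. Adding back the smooth quadratic and undoing the change of variables $x\mapsto Bx$ — both of which preserve membership in $W^{1,q}_{loc}$ — yields $u\in W^{1,q}_{loc}(\Omega)$. As $q$ was an arbitrary exponent below $\frac{n}{n-1}(p-1)$, the theorem follows. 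I expect the test‑function verifications (the linear change of variables and the subtraction of a quadratic) to be routine, in the spirit of the computations indicated at the start of the section; the substance of the argument lies in the geometric claim and the appeal to the existing $p'$‑superharmonic regularity theory.
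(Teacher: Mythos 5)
Your proposal is correct and follows essentially the same route as the paper: choose $p'\in(2,p)$ with $q<\frac{n}{n-1}(p'-1)$, show via the strict nesting $\partial\Theta_p\cap\partial\Theta_{p'}=\{0\}$ from \eqref{eq:nesting} that $F_{p'}$ is bounded above on $B^\top\Theta B$ (the paper proves this by expressing $F_{p'}$ as the support function of $\cK_{p'}$ and passing to limits in $A_k\in\cK_{p'}$ and $X_k/\|X_k\|$, while you argue directly via continuity and positive $1$-homogeneity of $F_{p'}$ — an equivalent packaging of the same compactness/cone argument), then subtract a quadratic and change variables on the test functions to land on a dominative $p'$-superharmonic function, to which Proposition 5 of \cite{MR4085709} and Theorem 5.18 of \cite{MR3931688} apply. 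The one place where your sketch is looser than the paper is that you defer the test-function verification of the change of variables and quadratic subtraction as "routine"; the paper spells this out, but the claim is indeed standard and your outline for it is sound.
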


When $p=2$ the condition \eqref{eq:suffcond2} is necessary (under our assumptions), but our method of proof for sufficiency does no longer work. In this case we have to assume \eqref{eq:suffcond}, which can be rephrased as $\Theta$ being confined to an affine half-space with positive definite outer normal.

\begin{proposition}[Sufficient condition. $p=2$]\label{prop:suff}
If there is a $m\in\mR$ and a positive definite $n\times n$ matrix $A$ such that
\begin{equation}
\Theta\subseteq\{X\in S(n)\;|\;\langle X,A\rangle \leq m\},
\label{eq:2cond}
\end{equation}
then every viscosity supersolution of $F(\cH u) = 0$
is $W^{1,q}_{loc}$ for all $0 < q < \frac{n}{n-1}$.
\end{proposition}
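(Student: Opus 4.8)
The strategy is to reduce the hypothesis to the classical fact that superharmonic functions lie in $W^{1,q}_{loc}$ for $0<q<\frac{n}{n-1}$, via the change-of-variables and quadratic-subtraction tricks already discussed in the text. First I would diagonalize: since $A$ is positive definite, write $A = B B^\top$ for some invertible $B$ (e.g. $B = A^{1/2}$), so that $\langle X, A\rangle = \langle X, BB^\top\rangle = \langle B^\top X B, I\rangle = \tr(B^\top X B)$. Hence the hypothesis \eqref{eq:2cond} says exactly that $\tr Y \le m$ for every $Y \in B^\top \Theta B$; equivalently, writing $X_0 := \frac{m}{n} I$ and using $F_2(Y) = \frac1n \tr Y$ together with the normalization $F_2(Y - X_0) = F_2(Y) - \frac{m}{n}$, we get
\[
B^\top \Theta B - \{X_0\} \subseteq \{Y \in S(n)\;|\; \tr Y \le m\} - \{X_0\} = \Theta_2.
\]
So condition \eqref{eq:suffcond} holds with this $B$ and $X_0$ and $p=2$.

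Next I would justify that if $u$ is a viscosity supersolution of $F(\cH u)=0$ in $\Omega$, then $v(x) := u(Bx) - \frac12 x^\top (B^\top X_0 B) x$, defined on the open set $B^{-1}\Omega$, is a viscosity supersolution of $G(\cH v)=0$ where $G$ has sublevel set $B^\top \Theta B - \{X_0\}$. This is the content of the invariance remarks in Section 2, carried out rigorously on test functions: if $\varphi$ touches $v$ from below at $x_0$, then $x \mapsto \varphi(B^{-1}x) + \frac12 (B^{-1}x)^\top(B^\top X_0 B)(B^{-1}x) = \varphi(B^{-1}x) + \frac12 x^\top X_0 x \cdot(\ldots)$ — more precisely the test function $\psi(y) := \varphi(B^{-1}y) + \frac12 y^\top X_0 y$ (after the coordinate change $y = Bx$, noting $x^\top B^\top X_0 B x = (Bx)^\top X_0 (Bx)$) touches $u$ from below at $Bx_0$, and $\cH\psi(Bx_0) = B^{-\top}\cH\varphi(x_0) B^{-1} + X_0$. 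Since $u$ is a supersolution, $\cH\psi(Bx_0) \in \Theta$, hence $B^{-\top}\cH\varphi(x_0)B^{-1} \in \Theta - \{X_0\}$, i.e. $\cH\varphi(x_0) \in B^\top(\Theta-\{X_0\})B = B^\top\Theta B - B^\top X_0 B$. Adjusting the quadratic correction so that the shifted matrix is $Y - X_0$ with $Y \in B^\top\Theta B$ (this is where one must be careful about whether to conjugate $X_0$ or not — I would set it up so the subtracted quadratic is $\frac12 x^\top X_0' x$ with $X_0' = B^\top X_0 B$, giving $\cH\varphi(x_0) \in B^\top\Theta B - \{B^\top X_0 B\}$, and then re-absorb), we conclude $\cH\varphi(x_0) \in \Theta_2$ after the same manipulation, i.e. $\tr \cH\varphi(x_0) = \Delta\varphi(x_0) \le 0$. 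Thus $v$ is a viscosity supersolution of $\Delta v = 0$ on $B^{-1}\Omega$, i.e. $v$ is superharmonic there.

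From the classical result (the $p=2$ case of Theorem 5.18 in \cite{MR3931688}, or the elementary theory of superharmonic functions), $v \in W^{1,q}_{loc}(B^{-1}\Omega)$ for all $0<q<\frac{n}{n-1}$. Finally, undoing the change of variables, $u(x) = v(B^{-1}x) + \frac12 x^\top X_0' \cdot(\ldots)$ — the quadratic term is smooth, and composition with the fixed linear isomorphism $B^{-1}$ preserves membership in $W^{1,q}_{loc}$ (locally, $|\nabla(v\circ B^{-1})|^q$ is comparable to $|\nabla v|^q \circ B^{-1}$, and the change of variables has bounded Jacobian), so $u \in W^{1,q}_{loc}(\Omega)$ for the same range of $q$.

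The main obstacle, such as it is, is purely bookkeeping: getting the conjugation of $X_0$ and the coordinate change to match up so that the sublevel-set inclusion lands precisely in $\Theta_2$, and making the test-function argument fully rigorous (viscosity supersolutions need not be $C^2$, so all Hessian computations must be done on touching test functions, as flagged in Section 2). There is no substantive analytic difficulty beyond invoking the known regularity of superharmonic functions; the content of the proposition is really just the observation that \eqref{eq:2cond} is an affine-half-space condition with positive definite normal, which is linearly equivalent to the half-space $\Theta_2$.
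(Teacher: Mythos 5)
Your plan follows the paper's approach: set $B=\sqrt{A}$, subtract a quadratic, observe the transformed function is superharmonic, and invoke the classical $W^{1,q}_{loc}$-regularity of superharmonic functions for $0<q<\frac{n}{n-1}$. The paper's proof is exactly this, with $v(x) := u\big(\sqrt{A}x\big)-\frac{m}{2n}|x|^2$.

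There is, however, a concrete error in the quadratic you subtract, and the ``re-absorb'' step you gesture at does not actually close it. You set $v(x):=u(Bx)-\frac{1}{2}x^\top(B^\top X_0 B)x$ with $X_0=\frac{m}{n}I$, so the subtracted quadratic is $\frac{m}{2n}x^\top A x$. Carrying the test function across the change of variables, $\psi(y)=\varphi(B^{-1}y)+\frac{m}{2n}|y|^2$ (since $(B^{-1}y)^\top A(B^{-1}y)=|y|^2$ when $B=\sqrt{A}$), so $\cH\psi(y_0)=B^{-\top}\cH\varphi(x_0)B^{-1}+\frac{m}{n}I\in\Theta$. Pairing with $A$ then gives
\[
\Delta\varphi(x_0)\leq m-\tfrac{m}{n}\tr A = m\big(1-\tfrac{\tr A}{n}\big),
\]
which need not be $\leq 0$. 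Equivalently, as you yourself note, $\cH\varphi(x_0)$ lands in $B^\top\Theta B-\{B^\top X_0 B\}$, not in $B^\top\Theta B-\{X_0\}$; these sets differ because $B^\top X_0 B=\frac{m}{n}A\neq\frac{m}{n}I$ unless $A=I$, and only the second is contained in $\Theta_2$.

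The fix is to subtract $\frac{m}{2n}|x|^2=\frac{1}{2}x^\top X_0 x$ (not the conjugate $\frac{1}{2}x^\top(B^\top X_0 B)x$), exactly as in the paper. Then $\psi(y):=\varphi(B^{-1}y)+\frac{m}{2n}|B^{-1}y|^2$ has $\cH\psi(y_0)=B^{-\top}\cH\varphi(x_0)B^{-1}+\frac{m}{n}B^{-\top}B^{-1}\in\Theta$, so $B^\top\cH\psi(y_0)B=\cH\varphi(x_0)+\frac{m}{n}I\in B^\top\Theta B$, i.e.\ $\cH\varphi(x_0)\in B^\top\Theta B-\{X_0\}\subseteq\Theta_2$, giving $\Delta\varphi(x_0)\leq 0$. (Alternatively, keep your quadratic but first rescale $(A,m)\mapsto(cA,cm)$ so that $\tr A=n$; the hypothesis is invariant under this.) With that correction, the rest of your argument is correct and coincides with the paper's.
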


\begin{figure}[h]
	\centering
	\begin{subfigure}[b]{0.45\textwidth}
		\centering
		\includegraphics{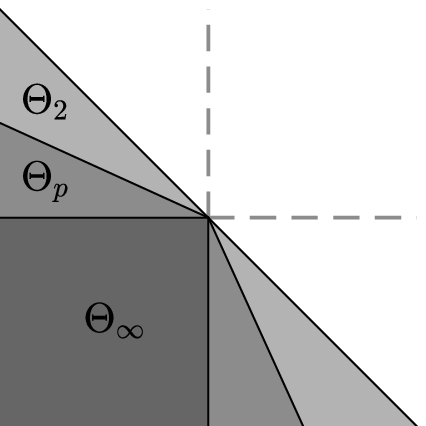}
		\caption{}
		\label{fig:good}
	\end{subfigure}
	\begin{subfigure}[b]{0.45\textwidth}
		\centering
		\includegraphics{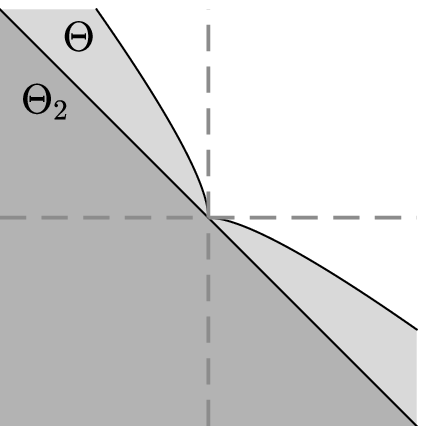}
		\caption{}
		\label{fig:bad}
	\end{subfigure}
	\caption{(a): Sublevel sets of the dominative $p$-Laplacian in a simplified model of $S(n)$.
	(b): The sublevel set $\Theta$ of equation \eqref{eq:ex} together with its asymptotic cone $\ac(\Theta) = \Theta_2$.}\label{fig:goodbad}
\end{figure}


The condition \eqref{eq:2cond} is probably not necessary as indicated by the following example. Consider the equation
\begin{equation}
\lambda_1(\cH u) + \lambda_2(\cH u) - 2\sqrt{1+\lambda_2(\cH u)} + 2 = 0
\label{eq:ex}
\end{equation}
in the unit ball $B_1$ in $\mR^2$.
One can check that radial solutions $w(x) = U(|x|)$ with $U(0) = +\infty$ are on the form $w(x) =  - \frac{1}{2}|x|^2 + 2c|x| -c^2\ln|x|$, $c\geq 1$, and are thus $W^{1,q}_{loc}(B_1)$ for all $q<n/(n-1) = 2$.
However, \eqref{eq:2cond} does not hold since the trace of the Hessian matrix of $w$ is not bounded above as $x\to 0$.
Nevertheless, $\ac(\Theta)\subseteq\Theta_2$, because if $X_k\in\Theta$ and $t_k\to\infty$ are sequences such that $X_k/t_k\to Z\in \ac(\Theta)$, then $t_k$ must be comparable to $\lambda_n(X_k)$ for large $k$ and
\[\tr Z = \lim_{k\to\infty}\frac{\tr X_k}{t_k} \leq \lim_{k\to\infty}\frac{2\sqrt{1+\lambda_n(X_k)} - 2}{t_k} = 0.\]
We conjecture that Theorem \ref{thm:suff} is valid also for $p=2$.

In order to state the necessary conditions, we establish some terminology. An operator $F\colon S(n)\to\oR$ is said to be \emph{rotationally invariant} (also called \emph{spectrally defined}) if it only depends on the eigenvalues of the argument. This is equivalent to
\[F(X) = F(Q^\top XQ)\qquad\text{for all $X\in S(n)$ and all $Q\in O(n)$,}\]
where $O(n)$ is the set of $n\times n$ orthogonal matrices. The Laplacian, the dominative $p$-Laplacian, the Pucci operators, and Monge-Amprère operators are typical examples from this class. Linear operators $F(X) = \tr(AX)$ are counterexamples provided $A$ is not a scaling of the identity matrix. However, if $A$ is positive definite, then $F$ can be \emph{made} rotationally invariant by a linear change of variables. Indeed, $X\mapsto F\big(\sqrt{A}^{-1}X\sqrt{A}^{-1})$ is the Laplacian.

\begin{definition}
An operator $F\colon S(n)\to\oR$ is \emph{essentially rotationally invariant} if there is an invertible matrix $B$ such that
\[X\mapsto F(BXB^\top )\]
is rotationally invariant.
\end{definition}

An operator is \emph{elliptic} if
\begin{equation}
X\leq Y\qquad\text{implies}\qquad F(X)\leq F(Y).
\label{eq:elldefstand}
\end{equation}
As always, $X\leq Y$ is the standard partial ordering in $S(n)$ and means $X-Y\in S_-(n)$. Ellipticity ensures that the sublevel set $\Theta$ of $F$ is a (negative) \emph{elliptic set}. That is,
\[\text{$X\leq Y$ and $Y\in\Theta$}\qquad\text{implies}\qquad X\in\Theta.\]
An equivalent statement can be made i terms of Minkovski addition,
\[\Theta + S_-(n) = \Theta.\]
In terms of the model of $S(n)$ used in Figure \ref{fig:goodbad}, the boundary of an elliptic set will appear as the graph of a nonincreasing function.
Next, if $F$ is rotationally invariant, then $\Theta$ is a \emph{rotationally invariant set}:
\[\Theta = \rot\Theta := \{Q^\top XQ\;|\; X\in\Theta,\,Q\in O(n)\}.\]
Similarly, if $F$ is essentially rotationally invariant, then $\Theta$ is essentially rotationally invariant, meaning that there is an invertible $B$ so that $B^\top \Theta B$ is a rotationally invariant set. Finally, $\Theta$ is convex whenever $F$ is convex, but also, for example, if $F$ is merely quasiconvex.

\begin{theorem}[Necessary condition]\label{thm:nec}
Let $p\in[2,\infty]$, $\Omega\subseteq\mR^n$ be open, and assume that $\Theta = \Theta(F) \subseteq S(n)$ is an elliptic, essentially rotationally invariant, and convex sublevel set of an operator $F\colon S(n)\to\oR$.

If every viscosity supersolution of
\[F(\cH u) = 0\qquad\text{in $\Omega$}\]
is $W^{1,q}_{loc}(\Omega)$ for all $q$ with
\[0 < q < \frac{n}{n-1}(p-1),\]
then there is an invertible $n\times n$ matrix $B$ such that
\begin{equation}
\ac(B^\top \Theta B)\subseteq \Theta_p.
\label{eq:nec}
\end{equation}
\end{theorem}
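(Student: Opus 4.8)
The plan is to prove the contrapositive: assuming $\ac(B^\top\Theta B)\not\subseteq\Theta_p$ for \emph{every} invertible $B$, construct a viscosity supersolution of $F(\cH u)=0$ that fails to be in $W^{1,q}_{loc}$ for some $q<\frac{n}{n-1}(p-1)$. Since $\Theta$ is essentially rotationally invariant, fix once and for all an invertible $B_0$ with $\Theta':=B_0^\top\Theta B_0$ rotationally invariant; by the reduction in Section 2 (change of variables, subtracting a quadratic) it suffices to work with $\Theta'$ and produce a non-$W^{1,q}_{loc}$ supersolution there. Note $\ac(\Theta')$ is again a closed, convex, elliptic, rotationally invariant cone. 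The hypothesis, transported through all linear changes of variables, says precisely that $\ac(\Theta')\not\subseteq\Theta_p$ — indeed if some $B$ gave $\ac(B^\top\Theta' B)\subseteq\Theta_p$ we could compose with $B_0$. (One should be slightly careful: the theorem's conclusion allows an arbitrary $B$, whereas negating it is "for all $B$"; but because $\Theta_p$ is itself rotationally invariant and scaling-invariant, and $\ac(\Theta')$ is rotationally invariant, the inclusion $\ac(B^\top\Theta' B)\subseteq\Theta_p$ is equivalent, by diagonalizing $B^\top B$, to the same inclusion for a diagonal positive $B$; this I would record as a preliminary lemma.)

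The core is then: given a rotationally invariant, closed, convex, elliptic cone $K=\ac(\Theta')$ with $K\not\subseteq\Theta_p$, and given that $\Theta'$ is an elliptic rotationally invariant convex set with $\ac(\Theta')=K$, build an explicit radial function $w(x)=U(|x|)$, lower semicontinuous with $U(0^+)=+\infty$, such that $\cH w(x)\in\Theta'$ for all $x\neq0$ (hence $w$ is a classical, thus viscosity, supersolution away from the origin — and the origin is handled trivially since $w$ is $+\infty$ there and lsc), yet $\int_{B_\varepsilon}|\nabla w|^q=\infty$ for a suitable $q<\frac{n}{n-1}(p-1)$. For a radial function the Hessian has eigenvalues $U''(r)$ (once, in the radial direction) and $U'(r)/r$ ($n-1$ times, tangential); so the containment $\cH w\in\Theta'$ becomes an ODE differential inequality $G(U''(r),U'(r)/r)\le0$ where $G$ encodes membership in the rotationally invariant set $\Theta'$. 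Because $K\not\subseteq\Theta_p$ there is a matrix $Z\in K$ with $F_p(Z)>0$; spectrally, $Z$ has eigenvalues $(\mu_1,\dots,\mu_n)$ with $\sum\mu_i+(p-2)\max_i\mu_i>0$. Scaling and using rotational invariance, I expect the relevant "worst direction" in $K$ to be captured by a two-valued eigenvalue pattern $(a,b,\dots,b)$ (radial value $a$, tangential value $b$, each repeated appropriately) — this is exactly the shape a radial Hessian produces — and the failure of inclusion in $\Theta_p$ translates into $a+(n-1)b+(p-2)\max(a,b)>0$. The construction of $w$ is then driven by solving $U''(r)=a\,\phi(r)$, $U'(r)/r=b\,\phi(r)$ asymptotically as $r\to0$ for a positive blow-up profile $\phi$; matching the two gives $rU''=\frac{a}{b}U'$, i.e. $U'(r)\sim c\,r^{a/b}$, and integrability of $|\nabla w|^q=|U'|^q$ near $0$ (in $\mathbb{R}^n$, with weight $r^{n-1}$) fails exactly when $q(a/b)+n-1\le -1$, i.e. $q\ge \frac{n}{-a/b}=\frac{nb}{-a}$ (this requires $a/b<0$, the genuinely non-elliptic-trivial regime). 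A computation — mirroring the sharpness discussion for $\Theta_p$, whose boundary cone gives the fundamental solution $|x|^{(p-n)/(p-1)}$ with the critical exponent $\frac{n}{n-1}(p-1)$ — should show that the eigenvalue ratio coming from a $Z\in K\setminus\Theta_p$ yields a critical $q$ strictly below $\frac{n}{n-1}(p-1)$, which is what we need.

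Two technical points need care. First, $K$ being the asymptotic cone rather than $\Theta'$ itself: the radial Hessian $\cH w$ must lie in $\Theta'$ for \emph{finite} $r$, not merely have its limiting direction in $K$. I would handle this by first building the profile so that $\cH w(x)/|\cH w(x)|\to Z/|Z|\in K$ as $x\to0$ with $\|\cH w(x)\|\to\infty$; since $K=\ac(\Theta')$, points $t_k Z$ are, for large $t_k$, within bounded distance of $\Theta'$, and then using ellipticity of $\Theta'$ ($\Theta'+S_-(n)=\Theta'$) I can push the Hessian down into $\Theta'$ by subtracting a small multiple of the identity from the profile — equivalently adding $-\delta|x|^2/2$ to $w$, which changes neither the blow-up at $0$ nor the Sobolev class. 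Convexity of $\Theta'$ is what makes this "bounded distance plus downward shift" argument clean. Second, one must verify that having produced a non-$W^{1,q}_{loc}$ supersolution on a small ball $B_\varepsilon\subseteq\Omega$, this is enough — it is, since membership in $W^{1,q}_{loc}(\Omega)$ is a local property, so a single bad point suffices. I expect the main obstacle to be the precise ODE/eigenvalue bookkeeping that converts the strict failure "$Z\in K$, $F_p(Z)>0$" into a strict inequality on the critical integrability exponent — in particular, identifying among all $Z\in K\setminus\Theta_p$ one realizable as a genuine radial Hessian direction (the two-eigenvalue pattern) and optimizing the resulting exponent; this is where rotational invariance of $K$ and the explicit form of $F_p$ must be used in full, and where sign conditions ($a/b<0$, $b>0$ forcing the tangential eigenvalue positive so that $U'$ has the right sign for genuine gradient blow-up) have to be checked against ellipticity so that the construction is not vacuous.
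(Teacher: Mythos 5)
Your overall strategy matches the paper's: argue by contrapositive and, from a direction $Z\in\ac(\Theta')\setminus\Theta_p$, build a radial, fundamental-solution-type supersolution that fails to be $W^{1,q}_{loc}$ for a subcritical $q$. The ``bounded-distance-plus-downward-shift'' idea is also exactly what the paper does: it uses the recession-cone formula $\ac(\Theta^B)+\{X\}\subseteq\clos\Theta^B$ (valid by convexity) to get $\ac(\Theta^B)-mI\subseteq\Theta^B$, and then adds $-\tfrac{m}{2}|x|^2$ to the profile. The logical worry you flag about quantifiers over $B$ is harmless: one fixes the $B$ that makes $\Theta^B$ rotationally invariant, and the negated hypothesis applies in particular to that $B$.

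However, there is a genuine gap at exactly the point you call ``the main obstacle.'' You never actually extract, from the existence of some $Z\in K\setminus\Theta_p$, a \emph{usable} two-eigenvalue direction together with the corresponding critical exponent. The paper does this via machinery you do not invoke and do not replace: it passes to the associated consistent distance operator $G$ of $K=\ac(\Theta^B)$ (Proposition \ref{prop:Gprop}), which is a rotationally invariant, sublinear, elliptic operator; defines its body cone aperture $g=p(G)$; shows $g<p$ using the minimality result $cF_g\le G$ (Proposition \ref{prop:invhol}); and then appeals to Proposition \ref{prop:exfundsol} to get $\cH w_{n,g}(x)\in K$ for all $x\neq0$. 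In your version, the step ``symmetrize $Z$ to the pattern $(b,\dots,b,a)$ and optimize the resulting exponent'' is not a routine computation: it requires showing that among all radial-Hessian directions in $K$ there is one whose fundamental-solution exponent is \emph{strictly} below $\tfrac{n}{n-1}(p-1)$, and that such a direction exists whenever $K\not\subseteq\Theta_p$. This is precisely the content of Propositions \ref{prop:invhol}--\ref{prop:exfundsol}; without them (or a substitute argument) the proof does not close. You would also need to justify, not just assert, that the asymptotic two-eigenvalue direction can be realized exactly (not merely asymptotically) by a smooth radial function whose Hessian stays in $\Theta^B$ on a full punctured ball; the paper sidesteps this by using the closed-form $w_{n,g}$, whose Hessian is exactly $|x|^{-\alpha}\Lambda_\alpha\in K$ for every $x\neq0$.

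One smaller issue: your sign discussion is off. For the radial profile to blow up to $+\infty$ at the origin you need $U'<0$ near $0$, so the tangential eigenvalue $U'(r)/r$ is negative, not positive. Indeed $\cH w_{n,g}$ has eigenvalues $(\alpha-1)|x|^{-\alpha}>0$ (radial) and $-|x|^{-\alpha}<0$ (tangential), i.e.\ $a>0$ and $b=-1<0$. The constraint that drives everything is $\lambda_n(Z)=a>0$ (else $Z\le0$ and $F_p(Z)\le0$), together with $b\le0$ since $\inter S_+(n)$ is disjoint from $\ac(\Theta^B)$ by Lemma \ref{lem:final}. Getting these signs right matters, because they are what force the Hessian pattern to be of fundamental-solution type rather than vacuous.
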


The proofs of Theorem \ref{thm:suff} and Proposition \ref{prop:suff} follow below. The proof of Theorem \ref{thm:nec} is postponed until Section \ref{sec:acdo}.

\begin{proof}[Proof of Theorem \ref{thm:suff}]
Let $2<p\leq n$. The \emph{associated convex body} to the dominative $p$-Laplacian is the set
\[\cK_p := \left\{A\in S(n)\;\middle|\; \tfrac{1}{n+p-2}I\leq A\leq \tfrac{p-1}{n+p-2}I,\,\tr A = 1\right\}.\]
See Section \ref{sec:risee}.
It is the unique convex and compact subset of $S(n)$ such that $F_p$ is the support function of $\cK_p$. That is,
\[F_p(X) = \max_{A\in\cK_p}\langle A,X\rangle.\]
Introduce the short-hand
\[\Theta^B := B^\top\Theta B\]
and suppose
\[\ac(\Theta^B)  \subseteq \Theta_p\]
for some invertible matrix $B$.
Let $0 < q < \frac{n}{n-1}(p-1)$, and choose $p'\in[2,p)$ such that we still have $q < \frac{n}{n-1}(p'-1)$. Now,
\[\cK_{p'} \subseteq\cK_p\]
and we claim that
\begin{equation}
\sup_{\substack{A\in\cK_{p'}\\ X\in\Theta^B}}\langle A,X\rangle
\label{eq:s}
\end{equation}
is finite.
Suppose it is not. It is $-\infty$ only if $\Theta$ is empty, but then there are no supersolutions and nothing to prove in the Theorem. There are therefore sequences $A_k\in\cK_{p'}$ and $X_k\in\Theta^B$ with $\langle A_k,X_k\rangle\to\infty$ as $k\to\infty$. By compactness, we may assume $A_k$ to converge to some $A_0\in\cK_{p'}$ and $\hat{X}_k := X_k/\|X_k\|$ to  converge to some $Z$ in the unit sphere in $S(n)$. Obviously, $\limsup_{k\to\infty}\|X_k\|=\infty$, and $\langle A_k,\hat{X}_k\rangle$ is eventually non-negative. Thus,
\[0\leq \langle A_0,Z\rangle \leq \max_{A\in\cK_{p'}}\langle A,Z\rangle \leq \max_{A\in\cK_p}\langle A,Z\rangle = F_p(Z)\leq 0\]
since $Z\in\ac(\Theta^B)\subseteq \Theta_p$. Therefore, $F_{p'}(Z) = 0 = F_p(Z)$, which leads to the contradiction $Z=0$ by \eqref{eq:nesting}.

Let $u$ be a supersolution to $F(\cH u) = 0$ in some $\Omega\subseteq\mR^n$. Define the function
\[v(x) := u\big(Bx\big) - \frac{m}{2}|x|^2\]
where $m$ is the number \eqref{eq:s}.
Suppose $\phi$ is a test function touching $v$ from below at $x_0\in\Omega^B := \{x\;|\; Bx\in\Omega\}$, and consider the test function
\[\psi(y) := \phi\big(B^{-1}y\big) + \frac{m}{2}y^\top B^{-\top}B^{-1}y.\]
Then, $\psi(y) \leq u(y)$, and at $y_0 := Bx_0$ we have $\psi(y_0) = u(y_0)$. Thus $F(\cH\psi(y_0))\leq 0$ and
\[\Theta\ni \cH\psi(y_0) = B^{-\top}\cH\phi(x_0)B^{-1} + mB^{-\top}B^{-1}.\]
Equivalently,
\[\cH\phi(x_0) + mI \in \Theta_B.\]
Now,
\begin{align*}
F_{p'}\big(\cH\phi(x_0)\big) &= F_{p'}\big(\cH\phi(x_0) + mI\big) - m\\
                     &= \max_{A\in\cK_{p'}}\langle A,\cH\phi(x_0) + mI\rangle - m\\
										 &\leq \sup_{\substack{A\in\cK_{p'}\\ X\in\Theta^B}}\langle A,X\rangle - m\\
										 &= 0,
\end{align*}
which proves that $v$ is dominative $p'$-superharmonic in $\Omega^B$.
This implies $p'$-superharmonicity by Proposition 5 in \cite{MR4085709} and $v$ is then $W^{1,q}_{loc}(\Omega^B)$ by Thm 5.18 in \cite{MR3931688}. It follows that $u(x) = v\big(B^{-1}x\big) + \frac{m}{2}x^\top B^{-\top}B^{-1}x$ is $W^{1,q}_{loc}(\Omega)$ as well.
\end{proof}

\begin{proof}[Proof of Proposition \ref{prop:suff}]
As in the above proof, a change of variables in the test functions will show that
\[v(x) := u\big(\sqrt{A}x\big) - \frac{m}{2n}|x|^2\]
is superharmonic whenever $u$ is a supersolution of $F(\cH u) = 0$.
\end{proof}

\section{Rotationally invariant sublinear elliptic operators}\label{sec:rotinvsubellop}\label{sec:risee}

An operator $G\colon S(n)\to \mR$ is \emph{sublinear} if it is positive homogenous and subadditive. That is, for $X,Y\in S(n)$ and positive numbers $c$ we have
$G(cX) = cG(X)$ and $G(X+Y)\leq G(X)+G(Y)$.
This class of operators is nothing but the family of \emph{support functions} in $S(n)$. There is thus a unique compact and convex subset $\emptyset\neq\cK = \cK(G)\subseteq S(n)$ such that
\[G(X) = \max_{A\in\cK}\langle A,X\rangle.\]
See Theorem 1.7.1 and the foregoing discussion in \cite{MR3155183}.

It can be checked that the sublinear operator is elliptic if and only if $\cK$ is a subset of $S_+(n) := \{A\in S(n)\;|\;A\geq 0\}$, and it is rotationally invariant if and only if
\[\cK = \rot\cK := \{QAQ^\top \;|\; A\in\cK,\, Q\in O(n)\}.\]
Furthermore, we label $G$ as \emph{non-totally degenerate} if $0\notin\cK$. Though not strictly necessary, this pragmatic assumption simplifies the exposition. It is a rather natural condition because $0\in\cK\;\Rightarrow\;G\geq 0$ in $S(n)$.

To each such operator we assign a number $p\in[2,\infty]$.

\begin{definition}\label{def:bca}
The \emph{body cone aperture} to a non-totally degenerate rotationally invariant sublinear elliptic operator \(G\colon S(n)\to\mathbb{R}\) with associated convex body $\cK\subseteq S(n)$ is
\[p = p(G) :=
\begin{cases}
\frac{n + \alpha - 2}{\alpha - 1},\qquad &\text{if $1<\alpha\leq n$,}\\
\infty, &\text{if $\alpha=1$,}
\end{cases}
\]
where
\begin{equation}
\alpha = \alpha(G) := \min_{A\in\cK}\frac{\tr A}{\lambda_n(A)}.
\label{eq:sca}
\end{equation}
\end{definition}

Observe that $p$ and $\alpha$ are well-defined. In fact, the trace and the largest eigenvalue $\lambda_n(A)>0$ are continuous functions of $A$, and $\cK$ is compact. Additionally,
\[1 = \frac{\lambda_n(A)}{\lambda_n(A)}\leq \frac{\tr A}{\lambda_n(A)}\leq \frac{n\lambda_n(A)}{\lambda_n(A)} = n\]
for all $A\in S_+(n)\setminus\{0\}$. The numbers are duals in the sense $(\alpha-1)(p-1) = n-1$. As $\cK\subseteq S_+(n)$, one can also note that \eqref{eq:sca} is a minimum of a ratio of the norms
\[\|X\|_1 := \sum_{i=1}^n|\lambda_i(X)|\qquad\text{and}\qquad \|X\|_\infty := \max\{-\lambda_1(X),\lambda_n(X)\}.\]
Moreover, $\alpha$ is invariant under positive scalings of the convex body. The body cone aperture is therefore -- as suggested by its name -- determined by the convex cone $\{cA\;|\; A\in\cK,\, c\geq 0\}$ in $S(n)$.

As examples, we mention the Pucci operator $F_{\lambda,\Lambda}$,
defined by the convex body
\[\cK_{\lambda,\Lambda} := \left\{A\in S(n)\;\middle|\; \lambda I\leq A\leq \Lambda I\right\},\qquad 0 < \lambda \leq \Lambda,\]
and the dominative $p$-Laplace operator
\begin{align*}
F_p(X) &= \frac{1}{n+p-2}\Big(\tr X + (p-2)\lambda_n(X)\Big), && p\in[2,\infty),\\
F_\infty(X) &= \lambda_n(X). &&
\end{align*}
Here, $F_p(X) = \max_{A\in\cK_p}\langle A,X\rangle$
where $\cK_p := \cK(F_p)$ must be the convex hull of the compact subset
\begin{equation}
\begin{aligned}
\cE_p &:= \left\{\frac{I + (p-2)\xi\xi^\top }{n+p-2}\;\middle|\; \xi\in\mathbb{S}^{n-1}\right\}, && p\in[2,\infty),\\
\cE_\infty &:= \left\{\xi\xi^\top \;\middle|\; \xi\in\mathbb{S}^{n-1}\right\}. &&
\end{aligned}
\label{eq:Epdef}
\end{equation}
A computation will reveal that
\begin{align*}
\cK_p &= \left\{A\in S(n)\;\middle|\; \tfrac{1}{n+p-2}I\leq A\leq \tfrac{p-1}{n+p-2}I,\,\tr A = 1\right\}, && p\in[2,\infty),\\
\cK_\infty &= \left\{A\in S(n)\;\middle|\; 0\leq A\leq I,\,\tr A = 1\right\},
\end{align*}
and thus,
\[\alpha(F_p) = \min_{A\in\cK_p}\frac{\tr A}{\lambda_n(A)} =
\begin{cases}
\frac{n+p-2}{p-1},\quad & p\in[2,\infty),\\
1, & p = \infty,
\end{cases}
\]
which implies $p(F_p) = p$. By way of illustration,
\[\alpha(F_{\lambda,\Lambda}) = \min_{A\in\cK_{\lambda,\Lambda}}\frac{\tr A}{\lambda_n(A)} = \min_{A\in\cK_{\lambda,\Lambda}}\frac{\lambda_1(A) + \cdots + \lambda_{n-1}(A)}{\lambda_n(A)} + 1 = \frac{(n-1)\lambda}{\Lambda} + 1\]
and $p(F_{\lambda,\Lambda}) = \Lambda/\lambda + 1$.

The dominative $p$-Laplacian holds the special position of being the \emph{minimal} operator of its class.

\begin{proposition}[Minimal operator]\label{prop:invhol}
Let \(G\colon S(n)\to\mathbb{R}\) be a non-totally degenerate rotationally invariant sublinear elliptic operator and
let \(p\in[2,\infty]\) be its body cone aperture.
Then there exists a constant \(c>0\) such that
\begin{equation}
cF_p(X) \leq G(X)\qquad\forall X\in S(n).
\label{eq:Gbound}
\end{equation}
\end{proposition}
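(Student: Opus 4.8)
The plan is to realize both $G$ and $F_p$ as support functions and reduce \eqref{eq:Gbound} to a containment of convex bodies up to a positive scaling. Since $G$ is sublinear with associated convex body $\cK = \cK(G)$, and $cF_p$ is the support function of $c\cK_p$, the inequality $cF_p(X)\le G(X)$ for all $X\in S(n)$ is equivalent to $c\cK_p\subseteq\cK$. So it suffices to produce a $c>0$ with $c\cK_p\subseteq\cK$. Because $\cK$ is rotationally invariant, compact, convex, and contained in $S_+(n)\setminus\{0\}$, and because $\cK_p$ is the convex hull of the orbit $\cE_p$ in \eqref{eq:Epdef}, it is enough to find $c>0$ such that $c\,\frac{I+(p-2)\xi\xi^\top}{n+p-2}\in\cK$ for one (hence, by rotational invariance, every) unit vector $\xi$: the convex hull of these points is then in $\cK$ by convexity.

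The key step is to manufacture the matrix $\frac{I+(p-2)\xi\xi^\top}{n+p-2}$ (up to scaling) as an average of elements of $\cK$. Fix the unit vector $\xi = e_n$. I would take the subgroup $O(n-1)$ of rotations fixing the $e_n$-axis and average a well-chosen $A_0\in\cK$ over it: set
\[
\overline{A} := \int_{O(n-1)} Q^\top A_0 Q \;\dd Q,
\]
the Haar-average, which again lies in $\cK$ since $\cK$ is convex, compact, and $O(n-1)$-invariant (being $O(n)$-invariant). By Schur's lemma / symmetry, $\overline{A}$ must be block-diagonal of the form $\overline{A} = a(I - e_ne_n^\top) + b\,e_ne_n^\top$ for scalars $a,b\ge 0$ depending on $A_0$. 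Thus every $O(n)$-orbit in $\cK$ yields such a ``symmetric'' matrix; the geometry of $\cK_p$, namely that $\cK_p$ consists exactly of matrices with $\tr = 1$ and eigenvalues in $[\frac{1}{n+p-2},\frac{p-1}{n+p-2}]$, means $\frac{I+(p-2)e_ne_n^\top}{n+p-2}$ is the extreme such matrix with largest possible $e_ne_n^\top$-eigenvalue relative to its trace — i.e., the one attaining the ratio $\frac{\tr}{\lambda_n} = \alpha$.

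The main obstacle is exactly connecting $\alpha(G) = \min_{A\in\cK}\tr A/\lambda_n(A)$ to the right scaling $c$. I would argue as follows. Pick $A_*\in\cK$ attaining the minimum, so $\tr A_* = \alpha\,\lambda_n(A_*)$ with $\lambda_n(A_*)>0$ (using non-total degeneracy). Rotate so that a unit eigenvector of $A_*$ for $\lambda_n(A_*)$ is $e_n$, then average over $O(n-1)$ to get $\overline{A_*} = a(I-e_ne_n^\top) + b\,e_ne_n^\top\in\cK$ with $b = \lambda_n(A_*)$ (the $e_n$-entry is preserved by the averaging) and $(n-1)a + b = \tr A_* = \alpha b$, so $a = \frac{\alpha-1}{n-1}b$. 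Hence $\overline{A_*} = b\big(\frac{\alpha-1}{n-1}(I-e_ne_n^\top) + e_ne_n^\top\big)$. When $p<\infty$, one checks $\frac{\alpha-1}{n-1} = \frac{1}{p-1}$ from the duality $(\alpha-1)(p-1) = n-1$, so $\overline{A_*} = b(p-1)\cdot\frac{I + (p-2)e_ne_n^\top}{n+p-2}$ after a short computation; thus $c := b(p-1) = (p-1)\lambda_n(A_*) > 0$ works, giving $c\,\frac{I+(p-2)\xi\xi^\top}{n+p-2}\in\cK$ for all $\xi$ and therefore $c\cK_p\subseteq\cK$. One must also verify that $\overline{A_*}$ really has $\lambda_n = b$ and not something larger, i.e.\ that $a\le b$; this is where $\alpha\le n$ and $\alpha>1$ (equivalently $1\le\frac{\alpha-1}{n-1}\le 1$... no: $\frac{\alpha-1}{n-1}\le 1$) are used. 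The case $p=\infty$ (i.e.\ $\alpha=1$, so $a = 0$) gives $\overline{A_*} = b\,e_ne_n^\top = b\,\xi\xi^\top \in\cK$, and $c = b$ yields $c\cK_\infty\subseteq\cK$ since $\cK_\infty$ is the convex hull of $\{\xi\xi^\top\}$. I would finally reassemble: $c\cK_p\subseteq\cK$ translates back to $cF_p(X)\le G(X)$ for all $X$, completing the proof. The delicate points to watch are the identification of which eigenvalue of $\overline{A_*}$ is the largest, and handling the possibility that the minimizing $A_*$ has a higher-multiplicity top eigenvalue (harmless: any choice of eigenvector works).
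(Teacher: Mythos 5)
Your overall strategy is correct and is in essence the same as the paper's: reduce \eqref{eq:Gbound} to a containment $c\cK_p\subseteq\cK$ of convex bodies, use rotational invariance to reduce further to putting a single scaled extreme point $cE$, $E\in\cE_p$, into $\cK$, and manufacture that element by symmetrizing the minimizer $A_*\in\cK$ of $\tr A/\lambda_n(A)$. Where you differ is in the symmetrization step. You average $A_*$ over the full stabilizer $O(n-1)$ of the top eigenvector (Haar measure; Schur's lemma forces the averaged matrix to have the required form $a(I-e_ne_n^\top)+be_ne_n^\top$). The paper instead symmetrizes on the eigenvalue vector: Lemma~\ref{lem:perc} shows that averaging $\bfa := (\lambda_1(A_0),\dots,\lambda_n(A_0))$ over the cyclic group of order $n-1$ acting on the first $n-1$ coordinates already lands on $\bfp$, and then conjugates by permutation and orthogonal matrices to get the containment $\cE_p\subseteq\frac{1}{c}\cK$. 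These are the same idea executed with a continuous versus a finite group; the continuous version is slicker once Schur's lemma is invoked, while the paper's finite version is entirely elementary and keeps the argument inside linear algebra and convex combinations of matrices.

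There is one genuine slip in your arithmetic. From $a=\frac{\alpha-1}{n-1}b=\frac{b}{p-1}$ you get
\[
\overline{A_*}=a(I-e_ne_n^\top)+b\,e_ne_n^\top=\frac{b}{p-1}\bigl(I+(p-2)e_ne_n^\top\bigr),
\]
so matching this with $c\cdot\frac{I+(p-2)e_ne_n^\top}{n+p-2}$ gives $c=\frac{b(n+p-2)}{p-1}=\alpha\,\lambda_n(A_*)=\tr A_*$, not $c=(p-1)\lambda_n(A_*)$ as you wrote (those coincide only when $(p-1)^2=n+p-2$). This is exactly the constant $c=\tr A'$ used in the paper. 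Also, your worry about whether $\lambda_n(\overline{A_*})=b$ (i.e.\ whether $a\le b$) is moot: you do not need to identify the largest eigenvalue of $\overline{A_*}$; you only need $\overline{A_*}\in\cK$ (which follows from convexity, compactness, and $O(n)$-invariance of $\cK$) and $\overline{A_*}=cE$ with $E\in\cE_p$. The only sign constraint that matters is $a\ge0$, which holds because $\alpha\ge1$ for any $A_*\in S_+(n)\setminus\{0\}$. Likewise, a higher-multiplicity top eigenvalue of $A_*$ causes no trouble, as you correctly anticipated.
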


For $p\in[2,\infty]$ we define the lower semicontinuous function $w_{n,p}\colon \mR^n\to\mR\cup\{+\infty\}$ as
\begin{equation}
w_{n,p}(x) :=
\begin{cases}
-\frac{p-1}{p-n}|x|^\frac{p-n}{p-1}, & 2\leq p\neq n,\\
-\ln|x|, & p = n,\\
-|x|, & p=\infty,
\end{cases}
\label{sublin_fundsol}
\end{equation}
with the interpretation $w_{n,p}(0) = \infty$ for $p\leq n$. It is a solution to the dominative $p$-Laplace equation $F_p(\cH w) = 0$ in $\mR^n\setminus\{0\}$ and it is a viscosity supersolution in $\mR^n$. We show next that the same is true for the equation $G(\cH w) = 0$ when $p = p(G)$.

\begin{proposition}[Existence of fundamental solution]\label{prop:exfundsol}
Let \(G\colon S(n)\to\mathbb{R}\) be a non-totally degenerate rotationally invariant sublinear elliptic operator and
let \(p\in[2,\infty]\) be its body cone aperture.
Then \(w_{n,p}\)
is a solution to the equation $G(\cH w) = 0$ in \(\mathbb{R}^n\setminus\{0\}\) and a viscosity supersolution in \(\mathbb{R}^n\).
\end{proposition}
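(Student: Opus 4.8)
The plan is to verify the equation $G(\cH w_{n,p}) = 0$ on $\mR^n\setminus\{0\}$ by a direct computation of the Hessian of the radial function $w_{n,p}$, using the rotational invariance of $G$ to reduce to evaluating $G$ on a diagonal matrix whose entries are the principal curvatures, and then to handle the origin separately via the viscosity (test-function) definition. First I would recall the classical computation: for a radial function $w(x) = U(|x|)$ with $r = |x|$, the Hessian is
\[
\cH w(x) = U''(r)\,\hat{x}\hat{x}^\top + \frac{U'(r)}{r}\bigl(I - \hat{x}\hat{x}^\top\bigr),\qquad \hat x = x/|x|,
\]
so its eigenvalues are $U''(r)$ (once, eigenvector $\hat x$) and $U'(r)/r$ ($(n-1)$-fold). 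Since $G$ is rotationally invariant it depends only on these eigenvalues, and — crucially — for the choice $U = w_{n,p}$ one checks $U'' < 0 < U'/r$ when $p > 2$ (resp.\ $U'' < 0$, $U'/r = 0$ when $p = \infty$, and $U'' = 0 < U'/r$ when $p = 2$), so the largest eigenvalue $\lambda_n(\cH w)$ equals $U'(r)/r$ and the remaining spectrum consists of $U''(r)$ together with $(n-2)$ further copies of $U'(r)/r$. A short calculation with $U(r) = -\frac{p-1}{p-n}r^{\frac{p-n}{p-1}}$ gives $U'(r)/r = -r^{\frac{-n+1-1+p-1}{\,p-1\,}}\cdot(\text{const})$; more cleanly, one finds that $\cH w_{n,p}(x) = r^{\frac{p-n}{p-1}-2}\,M(\hat x)$ for a fixed matrix-valued function $M$ depending only on the direction, with $M(\hat x)$ having eigenvalues $-\frac{1}{p-1}$ (multiplicity one, along $\hat x$) and $1$ (multiplicity $n-1$) — and $M(\hat x) = Q\,\Diag(-\tfrac{1}{p-1},1,\dots,1)\,Q^\top$ for the rotation $Q$ sending $e_1$ to $\hat x$. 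By positive homogeneity of $G$ it then suffices to show $G\bigl(\Diag(-\tfrac{1}{p-1},1,\dots,1)\bigr) = 0$.

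The key step is therefore the identity $G(X_p) = 0$ where $X_p := \Diag(-\tfrac{1}{p-1},1,\dots,1)$ (and $X_\infty := \Diag(-1,0,\dots,0)$, $X_2 := \Diag(0,1,\dots,1)$ in the boundary cases). I would argue this from the definition of the body cone aperture. Writing $G(X) = \max_{A\in\cK}\langle A, X\rangle$, the inequality $G(X_p)\le 0$ follows because every $A\in\cK$ satisfies $A\ge 0$ and the ratio $\tr A/\lambda_n(A)\ge \alpha = \frac{n+p-2}{p-1}$; diagonalizing the pairing in an eigenbasis of $X_p$ and using that $X_p = I - \frac{p}{p-1}\,vv^\top$ for a unit vector $v$, one gets $\langle A, X_p\rangle = \tr A - \tfrac{p}{p-1}\langle Av, v\rangle \le \tr A - \tfrac{p}{p-1}\lambda_n(A)$, and since $\tr A \le \lambda_n(A) + (n-1)\lambda_n(A)$ is \emph{not} what we want — rather we must be more careful and use that the worst case of $\langle A, X_p\rangle$ over $A\in\cK$ is attained when $v$ is a top eigenvector of $A$, reducing to $\langle A,X_p\rangle = \tr A - \tfrac{p}{p-1}\lambda_n(A)$, which is $\le 0$ exactly when $\tr A/\lambda_n(A) \le \tfrac{p}{p-1}$... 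This is the reverse of the defining inequality, so in fact I would instead write $X_p = -\tfrac{1}{p-1}\bigl(\tfrac{p}{p-1}\cdot\text{something}\bigr)$; the cleanest route is to note $X_p = c_p\bigl(\alpha\, vv^\top - I\bigr)\cdot(-1)$ up to scaling, whence $\langle A, X_p\rangle$ is proportional to $\langle A, I\rangle - \alpha\langle A, vv^\top\rangle = \tr A - \alpha\,v^\top A v$. Taking $v$ to be a unit $\lambda_n(A)$-eigenvector of $A$ gives $\tr A - \alpha\lambda_n(A) \ge 0$ by definition of $\alpha$ — so that particular alignment makes the pairing $\ge 0$, showing $G(X_p)\ge 0$; and for every $A$ and every unit $v$, $\tr A - \alpha\, v^\top A v \ge \tr A - \alpha\lambda_n(A)\ge 0$, but that gives a lower bound, not the needed upper bound. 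The resolution — and this is the genuinely delicate point — is that the \emph{correct} matrix is $-X_p$ up to scaling for the supersolution inequality and $+X_p$ is the one with $G = 0$; I would pin down the signs by demanding that the eigenvalue $U'(r)/r$ be the \emph{largest}, so that $\lambda_n(\cH w) = U'(r)/r > 0$ and $\tr \cH w = (U''(r) + (n-1)U'(r)/r)$, and then $F_p(\cH w) = \tfrac{1}{n+p-2}(\tr\cH w + (p-2)\lambda_n(\cH w)) = 0$ is the defining ODE that $w_{n,p}$ already solves; for general $G$ the point is that the minimizing $A^*\in\cK$ in $\alpha = \tr A^*/\lambda_n(A^*)$ has its top eigenvector aligned with the radial direction $\hat x$, and then $\langle A^*, \cH w\rangle$ reproduces, up to the positive constant $\tfrac{\tr A^*}{n+p-2}$, the value $F_p(\cH w) = 0$, while for all other $A\in\cK$ one gets $\langle A,\cH w\rangle\le 0$ by the same ratio bound. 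Thus $G(\cH w_{n,p}) = \max_{A\in\cK}\langle A,\cH w_{n,p}\rangle = 0$ on $\mR^n\setminus\{0\}$.

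For the statement at the origin, I would use that $w_{n,p}$ is lower semicontinuous with $w_{n,p}(0) = +\infty$ (when $p\le n$), so no test function can touch it from below at $0$ and the supersolution condition holds vacuously there; when $p = \infty$, $w_{n,\infty}(x) = -|x|$ is continuous at $0$, a test function $\phi\le w$ touching at $0$ has $\phi(0) = 0$ and $\phi(x)\le -|x|$, which forces $\lambda_n(\cH\phi(0)) \le 0$ (indeed $\cH\phi(0)\le 0$ by a second-order Taylor expansion along every line), hence $G(\cH\phi(0)) = \max_{A\in\cK}\langle A,\cH\phi(0)\rangle \le 0$ since $\cK\subseteq S_+(n)$. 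Finally, on $\mR^n\setminus\{0\}$ the function is $C^2$, so "solution" in the viscosity sense coincides with the pointwise identity just verified, completing the proof. I expect the sign-bookkeeping in the second paragraph — identifying precisely which alignment of the extremal $A\in\cK$ with the radial direction realizes the maximum, and checking it gives exactly $0$ rather than merely $\le 0$ — to be the main obstacle; everything else is the standard radial-Hessian computation plus lower semicontinuity at the singularity.
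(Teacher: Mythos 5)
Your overall strategy matches the paper's — compute the Hessian of the radial profile, use rotational invariance and positive homogeneity of $G$ to reduce to evaluating $G$ at a single diagonal matrix, and deduce that this value is exactly $0$ from the definition of $\alpha$ — but the concrete sign and eigenvalue computations are wrong, and as a result the second paragraph never actually closes.

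The error enters at the start. For $U(r) = -\frac{p-1}{p-n}r^{\frac{p-n}{p-1}}$ one has $U''(r) = \frac{n-1}{p-1}r^{-\alpha} > 0$ and $U'(r)/r = -r^{-\alpha} < 0$ (with $\alpha = \frac{n+p-2}{p-1}$), for every $p\in[2,\infty]$; you state the opposite, $U''<0<U'/r$. The radial direction $\hx$ is therefore the \emph{large} eigendirection, and $\cH w_{n,p}(x) = |x|^{-\alpha}\Lambda_\alpha$ with $\Lambda_\alpha = \diag(\alpha-1,-1,\dots,-1) = \alpha\bfe_1\bfe_1^\top - I$, whose eigenvalues are $\alpha-1 = \frac{n-1}{p-1}$ (once, radial) and $-1$ ($n-1$ times, tangential). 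Your claimed eigenvalues $-\tfrac{1}{p-1}$ and $1$, and the matrix $X_p=\Diag(-\tfrac{1}{p-1},1,\dots,1)$ you then try to work with, do not arise from this Hessian under any positive rescaling, and the remainder of the paragraph — as you candidly note — circles the sign question without resolving it. With the correct matrix the argument closes at once: since $\cK$ is rotationally invariant,
\[
G(\Lambda_\alpha) = \max_{A\in\cK}\bigl(\alpha\,\bfe_1^\top A\bfe_1 - \tr A\bigr) = \max_{A\in\cK}\bigl(\alpha\lambda_n(A) - \tr A\bigr),
\]
and by the definition $\alpha = \min_{A\in\cK}\tr A/\lambda_n(A)$ each term is $\le 0$ with equality at the minimizing $A'$, so $G(\Lambda_\alpha)=0$. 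Your closing heuristic — that the minimizer $A^*$ should have its top eigenvector aligned with $\hx$ — is indeed the right picture, and note that it supplies the lower bound $G(\Lambda_\alpha)\ge 0$ directly without invoking Proposition~\ref{prop:invhol}, which is what the paper uses for that inequality; so the idea, once the signs are fixed, would yield a slightly more self-contained proof than the paper's. A minor further point: at the origin for $p=\infty$ no $C^1$ test function can touch $-|x|$ from below (the gradient would have to satisfy $\nabla\phi(0)\cdot e\le -1$ and $\ge 1$ simultaneously), so there is nothing to check there; your detour through $\cH\phi(0)\le 0$ is unnecessary, though harmless.
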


In particular, the bound \eqref{eq:Gbound} is sharp.

The key ingredient in the proof of Proposition \ref{prop:invhol} is established in the following Lemma. Due to rotational invariance, it can be conducted in $\mR^n$ rather than in $S(n)$. The standard basis vectors are denoted by $\bfe_1,\dots,\bfe_n$, and we write $\bben := [1,\dots,1]^\top = \bfe_1 + \cdots +\bfe_n\in\mR^n$.

\begin{lemma}\label{lem:perc}
Let $p\in[2,\infty]$ and set $\bfp\in\mR^n$ to be
\begin{equation}
\bfp :=
\begin{cases}
\frac{1}{n+p-2}[1,\dots,1,p-1]^\top,\qquad & \text{if $p\in[2,\infty)$,}\\
[0,\dots,0,1]^\top = \bfe_n, & \text{if $p=\infty$.}
\end{cases}
\label{eq:bfpdef}
\end{equation}
Suppose $\bfa = [a_1,\dots,a_n]^\top\in\mR^n$ is a vector such that the sum of its elements equals the sum of the elements in $\bfp$, and such that $a_n$ is equal to the last entry of $\bfp$. i.e.,
\begin{equation}
\bben^\top\bfa = 1 = \bben^\top\bfp\qquad\text{and}\qquad \bfe_n^\top\bfa = \bfe_n^\top\bfp.
\label{eq:majcond}
\end{equation}
Then $\bfp$ is in the convex hull of the set of vectors in $\mR^n$ obtained by permuting the elements in $\bfa$. In symbols,
\[\bfp\in \con\{P\bfa\;|\; P\in\mathcal{P}(n)\}\]
where $\mathcal{P}(n)$ is the set of $n\times n$ permutation matrices.
\end{lemma}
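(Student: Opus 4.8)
The plan is to reduce the statement to the classical theory of majorization. Recall the Hardy--Littlewood--Pólya theorem: for $\bfx,\bfy\in\mR^n$ with the same coordinate sum, $\bfx$ lies in the convex hull of the permutations of $\bfy$ if and only if $\bfy$ \emph{majorizes} $\bfx$, i.e. the decreasing rearrangements satisfy $\sum_{i=1}^k y_{[i]}^\downarrow\geq\sum_{i=1}^k x_{[i]}^\downarrow$ for every $k$, with equality at $k=n$. So the whole lemma comes down to checking that, under the hypotheses \eqref{eq:majcond}, the vector $\bfa$ majorizes $\bfp$.

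First I would dispose of the case $p=\infty$ separately, since there $\bfp=\bfe_n$ and the hypotheses say $\bben^\top\bfa=1$ and $a_n=1$, hence $a_1+\cdots+a_{n-1}=0$; the partial sums of the decreasing rearrangement of $\bfa$ are then clearly $\geq 1,1,\dots$ once we note $a_{[1]}^\downarrow\geq a_n=1$ and all partial sums are $\geq$ the total minus the tail — actually the cleanest route is to observe $\bfe_n=\tfrac1{n}\sum_P P\bfa'$ is false in general, so instead verify majorization directly: $\sum_{i=1}^k a_{[i]}^\downarrow\geq 1$ for all $k\geq 1$ because the largest entry is at least $1$ and the remaining entries sum to $0$ so no prefix can drop below $1$... more carefully, $\sum_{i=1}^k a_{[i]}^\downarrow = 1 - \sum_{i=k+1}^n a_{[i]}^\downarrow \geq 1$ since the smallest $n-k$ entries sum to at most their share of the total, which is at most... this needs the fact that $a_{[i]}^\downarrow\geq a_n=1$ forces structure. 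I will instead just check it head-on; it is elementary.

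For the main case $p\in[2,\infty)$, write $s:=\tfrac{1}{n+p-2}$ so that $\bfp=(s,\dots,s,(p-1)s)$ with $(p-1)s$ being the largest entry (as $p\geq 2$). The hypotheses give $\bben^\top\bfa=1$ and $a_n=(p-1)s$. I need: (i) $a_{[1]}^\downarrow\geq (p-1)s$ — immediate since $a_n$ itself is an entry of $\bfa$; (ii) for $2\leq k\leq n-1$, $\sum_{i=1}^k a_{[i]}^\downarrow\geq (p-1)s+(k-1)s$; (iii) the $k=n$ equality, which holds by hypothesis. For (ii), equivalently $\sum_{i=k+1}^n a_{[i]}^\downarrow\leq (n-k)s$, i.e. the $n-k$ smallest entries of $\bfa$ average at most $s$. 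The point is that $a_n=(p-1)s\geq s$ is one of the larger entries, so it is \emph{not} among the $n-k$ smallest when $k\leq n-1$; hence those $n-k$ smallest entries are drawn from $a_1,\dots,a_{n-1}$, whose total is $1-(p-1)s=(n-1)s$. The $n-k$ smallest of $n-1$ numbers summing to $(n-1)s$ have sum at most $\tfrac{n-k}{n-1}(n-1)s=(n-k)s$. This is exactly the bound needed, completing the majorization check. Then Hardy--Littlewood--Pólya gives $\bfp\in\con\{P\bfa : P\in\mathcal P(n)\}$.

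The step I expect to be the real (if modest) obstacle is verifying clause (ii) — specifically the observation that $a_n$, being equal to the largest coordinate of $\bfp$, cannot belong to the block of smallest coordinates of $\bfa$, which is what lets me bound the tail sum by the average over $a_1,\dots,a_{n-1}$ rather than over all of $\bfa$; getting the ordering bookkeeping exactly right (and handling ties, and the degenerate case $p=2$ where $\bfp=(s,\dots,s)$ so the claim is just that the mean majorizes, which is Jensen/Birkhoff) is where care is needed. Everything else is either the cited majorization theorem or direct arithmetic.
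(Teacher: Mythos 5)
Your overall strategy---reduce to checking that $\bfa$ majorizes $\bfp$ and invoke Hardy--Littlewood--P\'olya---is a legitimate and genuinely different route from the paper's. The paper gives an explicit convex combination: it averages $\bfa$ over the $n-1$ powers of a cyclic permutation of the first $n-1$ coordinates (fixing the last), which produces a vector with $a_n$ in the last slot and the mean $(1-a_n)/(n-1)$ in every other slot; by hypothesis \eqref{eq:majcond} this is exactly $\bfp$. That argument is constructive and self-contained; yours trades the construction for a cited characterization theorem. Both are fine routes once carried out correctly.

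However, the verification of your step (ii) contains a genuine error. You assert that because $a_n=(p-1)s\geq s$, the entry $a_n$ cannot lie among the $n-k$ smallest entries of $\bfa$ when $k\leq n-1$. This is false. Take $n=4$, $p=3$ (so $s=1/5$, $a_4=2/5$) and $\bfa=(1/2,\,1/2,\,-2/5,\,2/5)$: for $k=2$ the two smallest entries are $2/5$ and $-2/5$, and $a_4$ is one of them. The majorization inequality still holds in this example (the two smallest sum to $0\leq(n-k)s=2/5$), but your argument does not establish it. The clean repair is to bound the sum of the $k$ \emph{largest} entries from below by a particular choice rather than trying to identify which entries are smallest: the sum of the $k$ largest entries of $\bfa$ is at least $a_n$ plus the sum of the $k-1$ largest among $a_1,\dots,a_{n-1}$; these are $k-1$ out of $n-1$ numbers with total $(n-1)s$, so their sum is at least $(k-1)s$; hence the $k$ largest entries of $\bfa$ sum to at least $(p-1)s+(k-1)s$, which is the sum of the $k$ largest entries of $\bfp$. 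Because this is a one-sided lower bound obtained from an explicit selection of $k$ indices, it sidesteps the tie and ordering issues you flagged, and it covers $p=\infty$ uniformly as well (there $a_n=1$ and $a_1+\cdots+a_{n-1}=0$, giving a lower bound of $1+0=1$ for every $k$).
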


\begin{proof}
Let $\tilde{P}\in \mathcal{P}(n-1)$ be a permutation with no cycles of order less than $n-1$. For example,
\[\tilde{P} =
\begin{bmatrix}
	0 & 0 & \cdots & 0 & 1\\
	1 & 0 & \cdots & 0 & 0\\
	0 & 1 & \cdots & 0 & 0\\
	\vdots & \vdots & \ddots & \vdots & \vdots\\
	0 & 0 & \cdots & 1 & 0
\end{bmatrix} \in\mR^{(n-1)\times(n-1)}.\]
Then $\tilde{P}^{n-1} = I_{n-1}$ and
\[\tilde{P} + \tilde{P}^2 + \cdots + \tilde{P}^{n-1} = \tilde{\bben}\tilde{\bben}^\top\]
is the $(n-1)\times(n-1)$ matrix with all ones. Here, $\tilde{\bben} := [1,\dots,1]^\top\in\mR^{n-1}$. Let
\[P := 
\begin{bmatrix}
	\tilde{P} & \bf0\\
	\bf0^\top & 1
\end{bmatrix}\in \mathcal{P}(n),\]
and write $\tilde{\bfa} := [a_1,\dots,a_{n-1}]^\top$.
Now $P^k = \big[\begin{smallmatrix}
	\tilde{P}^k & \bf0\\
	\bf0^\top & 1
\end{smallmatrix}\big]$ so,
\begin{align*}
\con\{P\bfa\;|\; P\in\mathcal{P}(n)\}
	&\ni \sum_{k=1}^{n-1}\frac{1}{n-1}P^k\bfa\\
	&= \frac{1}{n-1}
	\begin{bmatrix}
		\tilde{\bben}\tilde{\bben}^\top & \bf0\\
		\bf0^\top & n-1
	\end{bmatrix}
	\begin{bmatrix}
		\tilde{\bfa}\\ a_n
	\end{bmatrix}\\
	&= 
	\begin{bmatrix}
		\dfrac{\tilde{\bben}^\top\tilde{\bfa}}{n-1}\tilde{\bben}\\
		a_n
	\end{bmatrix}.
\end{align*}
By \eqref{eq:majcond}, this equals $\bfp$: If $p=\infty$, then $\tilde{\bben}^\top\tilde{\bfa} = 1-a_n = 0$, and when $p<\infty$,
\[\frac{\tilde{\bben}^\top\tilde{\bfa}}{n-1} = \frac{1-a_n}{n-1} = \frac{1-\frac{p-1}{n+p-2}}{n-1} = \frac{1}{n+p-2}.\]
\end{proof}

\begin{proof}[Proof of Proposition \ref{prop:invhol}]

We have
\[\alpha = \alpha(G) = \min_{A\in\cK}\frac{\tr A}{\lambda_n(A)} = \frac{\tr A'}{\lambda_n(A')}\]
for some $A'$ in the associated convex body $\cK$ of $G$. Set
\[c := \tr A'>0\]
and let $A_0 := A'/c$. Then $\tr A_0 = 1$ and the vector
\[\bfa := [\lambda_1(A_0),\dots,\lambda_n(A_0)]^\top\]
satisfies $\bben^\top\bfa = 1$. Moreover, $\lambda_n(A_0) = 1/\alpha$ and when $p = p(G)\in[2,\infty]$ is the body cone aperture of $G$, then
\[\bfe_n^\top\bfa = \frac{1}{\alpha} =
\begin{cases}
\frac{p-1}{n+p-2},\qquad & p\in[2,\infty),\\
1, & p = \infty,
\end{cases}
\]
which equals the last element of the the vector $\bfp\in\mR^n$ given by \eqref{eq:bfpdef} in Lemma \ref{lem:perc}. Thus,
\begin{equation}
\bfp\in\con\{P\bfa\;|\;P\in\mathcal{P}(n)\}.
\label{eq:pcon}
\end{equation}

By using the standard property
\[\diag(Pz) = P(\diag z)P^\top\qquad\forall z\in\mR^n,\]
of permutation matrices $P$,
we want to show that the dominative body $\cK_p$ is a subset of $\frac{1}{c}\cK$.
Of course, $\diag\colon\mR^n\to S(n)$ is the linear mapping $\diag z := \sum_{k=1}^n z_k\bfe_k\bfe_k^\top$.

Since $\bfp = [\lambda_1(E),\dots,\lambda_n(E)]^\top$ for every $E\in\cE_p$ (see formula \eqref{eq:Epdef}), we can choose $Q\in O(n)$ such that $Q^\top EQ = \diag\bfp$.
By \eqref{eq:pcon} we can write $\bfp$ as a convex combination $\sum_i \alpha_i P_i\bfa$. Thus,
\begin{align*}
Q^\top EQ &= \diag\left(\sum_i \alpha_i P_i\bfa\right)\\
      &= \sum_i \alpha_i \diag\left(P_i\bfa\right)\\
			&= \sum_i \alpha_i P_i(\diag\bfa)P_i^\top\\
			&= \frac{1}{c}\sum_i \alpha_i P_iU^\top A'UP_i^\top
\end{align*}
for some $U\in O(n)$ diagonalizing $A'\in\cK$. The orthogonal matrices $Q_i := QP_iU^\top$ then makes
\[E = \frac{1}{c}\sum_i \alpha_i Q_i A'Q_i^\top \in \frac{1}{c}\con\rot\{A'\}\subseteq \frac{1}{c}\con\rot\cK = \frac{1}{c}\cK.\]
That is, $\cE_p\subseteq \frac{1}{c}\cK$ and
\[\cK_p = \con\cE_p \subseteq \frac{1}{c}\cK\]
as well. Thus, for any $X\in S(n)$,
\[cF_p(X) = c\max_{A\in\cK_p}\tr(AX) = \max_{A\in c\cK_p}\tr(AX) \leq \max_{A\in\cK}\tr(AX) = G(X).\]
\end{proof}

\begin{proof}[Proof of Proposition \ref{prop:exfundsol}]

For $p\in[2,\infty]$ one can check that the Hessian matrix $\cH w_{n,p}\colon \mR^n\setminus\{0\}\to S(n)$ of the fundamental solution is
\[\cH w_{n,p}(x) = |x|^{-\alpha}\Big( (\alpha-1)\hx\hx^\top - (I-\hx\hx^\top) \Big),\qquad \hx := \frac{x}{|x|},\]
where $\alpha\in[1,n]$ is related to $p$ as in Definition \ref{def:bca}.
Setting
\[\Lambda_\alpha := \diag(\alpha-1,-1,\dots,-1) = \alpha\bfe_1\bfe_1^\top - I,\]
produces a diagonalization $|x|^{-\alpha}\Lambda_\alpha$ of $\cH w_{n,p}(x)$.
Since $G$ is rotational invariant and positive homogenous,
\[G(\cH w_{n,p}(x)) = |x|^{-\alpha}G(\Lambda_\alpha)\]
and will vanish independently of $x\neq 0$ if we can show that $G(\Lambda_\alpha) = 0$. Indeed, when $p$ is the body cone aperture of $G$, Proposition \ref{prop:invhol} and the fact $\tr A/\lambda_n(A) \geq \alpha$ for all $A\in\cK$  yields
\begin{align*}
0  = cF_p(\Lambda_\alpha) &\leq G(\Lambda_\alpha)\\
  &= \max_{A\in\cK}\tr(A\Lambda_\alpha)\\
	&= \max_{A\in\cK} \alpha\, \bfe_1^\top A\bfe_1 - \tr A\\
	&\leq \max_{A\in\cK} \alpha \lambda_n(A) - \tr A \leq 0,
\end{align*}
and $w_{n,p}$ is a smooth solution of $G(\cH w) = 0$ in $\mR^n\setminus\{0\}$. There are no test functions touching the fundamental solution from below at $x=0$, and $w_{n,p}$ is therefore also a viscosity supersolution in $\mR^n$.
\end{proof}

We conclude this Section with an observation regarding uniformly elliptic operators,
\[\lambda\tr A \leq F(X+A)-F(X)\leq \Lambda\tr A,\qquad \forall A\geq 0.\]
Here, $0<\lambda\leq\Lambda\in\mR$ are the ellipticity constants of $F$. The above is equivalent to
\begin{equation}
F(X+Y) - F(X) \leq F_{\lambda,\Lambda}(Y),\qquad \forall X,Y\in S(n),
\label{eq:unifdef}
\end{equation}
where $F_{\lambda,\Lambda}$ is the Pucci operator. Although uniform ellipticity is in many settings a desirable property of $F$, it has a negative impact on the question raised in this paper.
The ``more'' uniformly elliptic the equation is, the ``less'' integrable are the gradients of the supersolutions.
In fact, since the body cone aperture of $F_{\lambda,\Lambda}$ is $p := \frac{\Lambda}{\lambda}+1$,  Proposition \ref{prop:exfundsol} and \eqref{eq:unifdef} implies that $w(x) := w_{n,p}(x) + \frac{1}{2}x^\top X_0 x$, $X_0\in\Theta(F)$, is a viscosity supersolution of $F(\cH u) = 0$, which is not in $W^{1,q}_{loc}(\mR^n)$ for
\[q = \frac{n}{n-1}(p-1) = \frac{n}{n-1}\frac{\Lambda}{\lambda}.\]

\section{Associated consistent distance operators and the proof of Theorem \ref{thm:nec}}\label{sec:acdo}

Let $\Theta$ be a negative elliptic and proper subset of $S(n)$. i.e.,
\[\emptyset\neq\Theta\neq S(n)\qquad\text{and}\qquad \Theta + S_-(n) = \Theta.\]
The \emph{associated consistent distance operator} (acdo) to $\Theta$ is the function $\oF$ defined on $S(n)$ as
\begin{equation}
\oF(X) := -\sup\{t\in\mR\;|\; X + tI\in\Theta\}.
\label{eq:acdodef}
\end{equation}
In \cite{brustad2020comparison} we prove a more general version of the following.

\begin{proposition}
The acdo $\oF$ to a negative elliptic and proper subset $\Theta$
is finite and elliptic (in the standard sense \eqref{eq:elldefstand}). It is 1-Lipschitz and has the nondegeneracy
\[\oF(X+\tau I) - \oF(X) = \tau\]
for all $X\in S(n)$, $\tau \in\mR$.
Moreover, if $\Theta$ is a sublevel set of an operator $F$, then every viscosity supersolution of $F(\cH u) = 0$ is also a viscosity supersolution of $\oF(\cH u) = 0$. The opposite inclusion holds if $\Theta$ is closed.
\end{proposition}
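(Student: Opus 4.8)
The plan is to verify the four assertions in the order they are listed, since each one essentially sets up the next. First, I would check that $\oF$ is finite: the supremum in \eqref{eq:acdodef} is over a nonempty set because, given any $Y\in\Theta$ and any $X\in S(n)$, ellipticity ($\Theta+S_-(n)=\Theta$) gives $X+tI\in\Theta$ for $t$ sufficiently negative (take $t$ so that $X+tI\leq Y$); and it is bounded above because $\Theta\neq S(n)$ forces, for any fixed $X$, that $X+tI\notin\Theta$ once $t$ is large enough — otherwise ellipticity would make all of $S(n)$ lie in $\Theta$. So $\oF(X)\in\mR$. Ellipticity of $\oF$ in the sense \eqref{eq:elldefstand} is immediate: if $X\leq Y$ then $\{t\;|\;X+tI\in\Theta\}\supseteq\{t\;|\;Y+tI\in\Theta\}$ by ellipticity of $\Theta$, so the sup for $X$ is at least that for $Y$, hence $\oF(X)\leq\oF(Y)$.

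Next, the nondegeneracy $\oF(X+\tau I)-\oF(X)=\tau$ is a direct reindexing: $\{t\;|\;(X+\tau I)+tI\in\Theta\}=\{s-\tau\;|\;X+sI\in\Theta\}$, so its supremum is $\big(\sup\{s\;|\;X+sI\in\Theta\}\big)-\tau$, giving $\oF(X+\tau I)=\oF(X)-\tau$ after the minus sign... wait — with the sign convention $\oF=-\sup(\cdots)$ this reads $\oF(X+\tau I)=\oF(X)+\tau$, as claimed. The $1$-Lipschitz property then follows by combining ellipticity with this translation identity: for $X,Y\in S(n)$ set $\tau:=\|X-Y\|_\infty=\lambda_n(|X-Y|)$ (operator norm), so that $X\leq Y+\tau I$ and $Y\leq X+\tau I$; ellipticity and nondegeneracy give $\oF(X)\leq\oF(Y+\tau I)=\oF(Y)+\tau$ and symmetrically $\oF(Y)\leq\oF(X)+\tau$, hence $|\oF(X)-\oF(Y)|\leq\|X-Y\|_\infty$.

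For the supersolution equivalence, the crucial observation is that the sublevel set of $\oF$ is closely tied to $\Theta$: I claim $\{X\;|\;\oF(X)\leq 0\}=\clos\Theta$ when $\Theta$ is closed, it equals $\Theta$ exactly, and in general $\Theta\subseteq\{\oF\leq 0\}\subseteq\clos\Theta$. Indeed $\oF(X)\leq 0$ means $\sup\{t\;|\;X+tI\in\Theta\}\geq 0$, which — since the set $\{t\;|\;X+tI\in\Theta\}$ is a half-line $(-\infty,t_0]$ or $(-\infty,t_0)$ by ellipticity of $\Theta$ — is equivalent to $X+tI\in\Theta$ for all $t<0$, i.e. to $X\in\clos\Theta$ (and to $X\in\Theta$ when $\Theta$ is closed). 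Now if $u$ is a viscosity supersolution of $F(\cH u)=0$ and $\phi$ touches $u$ from below at $x_0$, then $\cH\phi(x_0)\in\Theta$ (the sublevel set of $F$), hence $\oF(\cH\phi(x_0))\leq 0$ since $\Theta\subseteq\{\oF\leq 0\}$; so $u$ is a supersolution of $\oF(\cH u)=0$. Conversely, if $u$ is a supersolution of $\oF(\cH u)=0$ and $\phi$ touches from below at $x_0$, then $\cH\phi(x_0)\in\{\oF\leq 0\}$; when $\Theta$ is closed this is exactly $\Theta$, so $F(\cH\phi(x_0))\leq 0$ and $u$ is a supersolution of $F(\cH u)=0$.

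The main obstacle, such as it is, is purely bookkeeping with the sign convention in \eqref{eq:acdodef} and the strictness of the half-line $\{t\;|\;X+tI\in\Theta\}$ — i.e. being careful about whether the supremum is attained, which is what distinguishes $\Theta$ from $\clos\Theta$ and hence governs the "closed" hypothesis in the last sentence. Beyond that, everything is a one-line manipulation; no hard analysis is involved, and the statement is flagged as a special case of a result proved in \cite{brustad2020comparison}, so I would keep the argument brief.
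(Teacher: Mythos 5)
Your proof is correct and complete. Note that the paper does not actually supply its own proof of this proposition --- it is cited as a special case of a more general result in \cite{brustad2020comparison} --- so there is no in-paper argument to compare against. Each of your steps checks out: finiteness and ellipticity follow directly from $\Theta+S_-(n)=\Theta$ together with $\emptyset\neq\Theta\neq S(n)$; the translation identity is a correct reindexing of the supremum (and you caught the sign); the $1$-Lipschitz bound in $\|\cdot\|_\infty$ follows from $-\tau I\leq X-Y\leq\tau I$ plus the two previous properties; and the identification $\{\oF\leq 0\}=\clos\Theta$ (using that $\{t\;|\;X+tI\in\Theta\}$ is a half-line by ellipticity, with the endpoint attained iff $\Theta$ is closed) gives exactly the asymmetry needed for the one-directional versus two-directional supersolution inclusion. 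The only minor presentational point is that the sentence beginning ``I claim $\{X\;|\;\oF(X)\leq 0\}=\clos\Theta$ when $\Theta$ is closed, it equals $\Theta$ exactly\dots'' reads awkwardly; what you mean (and then correctly prove) is $\{\oF\leq 0\}=\clos\Theta$ in general, which collapses to $\Theta$ when $\Theta$ is closed.
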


The acdo is in fact the signed distance function
\[\oF(X) =
\begin{cases}
\dist(X,\partial\Theta),\qquad &X\notin\Theta,\\
-\dist(X,\partial\Theta), &X\in\Theta,
\end{cases}
\]
from the boundary of $\Theta$ when the distance $\dist(X,\partial\Theta) := \inf_{W\in\partial\Theta}\|X-W\|_\infty$ is measured in the infinity norm $\|X\|_\infty := \max\{-\lambda_1(X),\lambda_n(X)\}$. For $X\in S(n)$, $\oF(X)$ is the unique number such that
\begin{equation}
X - \oF(X)I\in\partial\Theta.
\label{eq:bndacdo}
\end{equation}

In addition to the ellipticity and uniform continuity, the associated consistent distance operator \eqref{eq:acdodef} can have desirable global properties that may not be present in the original operator $F$.

\begin{proposition}\label{prop:Gprop}
Let $\emptyset\neq\Theta\neq S(n)$ be an elliptic set. Then the following hold.
\begin{enumerate}[(a)]
	\item If $\Theta$ is convex, then $\oF$ is convex.
	\item If $S(n)\setminus\Theta$ is convex, then $\oF$ is concave.
		\item If $\partial\Theta$ is a (hyperplane/subspace) in $S(n)$, then $\oF$ is (affine/linear).
	\item If $\Theta$ is a cone, then $\oF$ is positively homogeneous. i.e.,
	\[\oF(cX) = c\oF(X)\qquad\forall c> 0,\,X\in S(n).\]
	\item If $\Theta$ is a convex cone, then $\oF$ is sublinear.
	\item If $\Theta$ is a rotationally invariant set, then $\oF$ is rotationally invariant.
\end{enumerate}
\end{proposition}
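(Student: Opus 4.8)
The statements (a)–(f) are all of the same flavor: a geometric property of the elliptic set $\Theta$ translates into an analytic property of its signed distance function $\oF$. The unifying tool is the identity \eqref{eq:bndacdo}, i.e. $X - \oF(X)I\in\partial\Theta$, together with the monotonicity characterization $\oF(X) = -\sup\{t : X+tI\in\Theta\}$ and the nondegeneracy $\oF(X+\tau I) = \oF(X)+\tau$. I would handle the six items roughly in the order (d), (f), (a), (b), (c), (e), since each reuses the previous ones. Throughout I would freely use the ellipticity already established in the preceding Proposition (that $\oF$ is finite, 1-Lipschitz, elliptic).

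For (d): if $\Theta$ is a cone and $c>0$, then $X+tI\in\Theta \iff c(X+tI) = (cX)+(ct)I\in\Theta$, so the supremum defining $-\oF(cX)$ is exactly $c$ times the supremum defining $-\oF(X)$; hence $\oF(cX)=c\oF(X)$. For (f): if $Q\in O(n)$, then $Q^\top(X+tI)Q = Q^\top XQ + tI$, and $\Theta$ being rotationally invariant means $X+tI\in\Theta \iff Q^\top XQ + tI\in\Theta$; the two sup-sets coincide, giving $\oF(Q^\top XQ)=\oF(X)$. For (a): fix $X,Y$ and $\theta\in[0,1]$, set $Z=\theta X+(1-\theta)Y$. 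By \eqref{eq:bndacdo}, $X-\oF(X)I\in\partial\Theta\subseteq\Theta$ and likewise for $Y$; since $\Theta$ is convex, $Z - (\theta\oF(X)+(1-\theta)\oF(Y))I\in\Theta$, so by the definition of $\oF$ as a sup we get $\oF(Z)\le \theta\oF(X)+(1-\theta)\oF(Y)$. (I would note $\partial\Theta\subseteq\clos\Theta$ and handle the open/closed distinction by a limiting remark, or simply observe that $X-(\oF(X)+\varepsilon)I\in\inter\Theta$ for $\varepsilon>0$ when $\Theta$ is closed, and pass to the limit $\varepsilon\to 0$.) Part (b) is the mirror image: if $S(n)\setminus\Theta$ is convex, points $X-\oF(X)I$ and $Y-\oF(Y)I$ lie in $\clos(S(n)\setminus\Theta) = S(n)\setminus\inter\Theta$, their convex combination avoids $\inter\Theta$, and one reads off $\oF(Z)\ge\theta\oF(X)+(1-\theta)\oF(Y)$.

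For (c): if $\partial\Theta$ is a hyperplane $H = \{X : \langle A,X\rangle = m\}$ for some $A\neq 0$ and $m\in\mR$, then by \eqref{eq:bndacdo}, $\langle A, X-\oF(X)I\rangle = m$, i.e. $\oF(X) = (\langle A,X\rangle - m)/\langle A,I\rangle$, which is affine (the denominator $\langle A,I\rangle = \tr A$ is nonzero because ellipticity forces $A\in S_+(n)\setminus\{0\}$, so $\tr A>0$, after normalizing the sign of $A$ so that $\Theta$ is the half-space on the side $\langle A,X\rangle\le m$). If moreover $H$ is a subspace then $m=0$ and $\oF$ is linear. Finally, (e) follows by combining (d) and (a): a convex cone gives an $\oF$ that is both positively homogeneous and convex, and a positively homogeneous convex function is subadditive (for $X,Y$, write $X+Y = 2\cdot\tfrac12(X+Y)$, apply homogeneity then convexity), hence sublinear; alternatively this is immediate from the preceding Proposition's remark that the acdo of a convex cone is a support function.

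I do not expect a serious obstacle here — the arguments are short once one has \eqref{eq:bndacdo} and the sup-formula. The one point requiring a little care is the interface between $\partial\Theta$, $\clos\Theta$, and $\inter\Theta$ in parts (a)–(c): the set $\Theta$ is not assumed closed, so "$X-\oF(X)I\in\partial\Theta$" should be read with the convention of \eqref{eq:bndacdo} from \cite{brustad2020comparison}, and the convexity arguments are cleanest if phrased via the strict inclusions $X-(\oF(X)-\varepsilon)I\in\Theta$ and $X-(\oF(X)+\varepsilon)I\notin\clos\Theta$ followed by $\varepsilon\to 0^+$. That is the only place where I would slow down; everything else is a two-line verification.
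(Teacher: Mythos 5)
Your proposal is correct and follows essentially the same approach as the paper: it rests on the identity $X-\oF(X)I\in\partial\Theta$, the sup-formula, and the nondegeneracy $\oF(X+\tau I)=\oF(X)+\tau$, with the same ordering of cases and the same closure/interior care in (a)--(c). The only noticeable deviation is in (b): the paper reduces concavity to (a) by observing that $\tilde\Theta := -(S(n)\setminus\Theta)$ is an elliptic set whose acdo is $X\mapsto-\oF(-X)$, whereas you argue directly that the boundary points lie in the convex set $S(n)\setminus\inter\Theta$; both are valid and of comparable length, so this is a cosmetic rather than structural difference.
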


\begin{proof}
(a): Since the closure of a convex set is convex, we may assume that $\Theta$ is closed.
Let $X,Y\in S(n)$ and let $\gamma\in[0,1]$. Then $Z := \gamma X + (1-\gamma)Y\in\Theta$ and $X-\oF(X)I\in\partial\Theta$, $Y-\oF(Y)I\in\partial\Theta$. Thus,
\[Z - \big( \gamma \oF(X) + (1-\gamma)\oF(Y) \big) I = \gamma\big(X-\oF(X)I\big) + (1-\gamma)\big(Y-\oF(Y)I\big) \in\Theta,\]
and
\[- \oF(Z) = \sup\{t\;|\; Z+tI\in\Theta\} \geq - \big( \gamma \oF(X) + (1-\gamma)\oF(Y) \big).\]

(b): One can show that $\tilde{\Theta} := -(S(n)\setminus\Theta)$ is elliptic. Then since
\begin{align*}
X\mapsto - \oF(-X) &= \sup\{t\;|\; -X+tI\in\Theta\}\\
       &= \inf\{t\;|\; -X+tI\notin\Theta\}\\
			 &= -\sup\{t\;|\; -X-tI\notin\Theta\}\\
			 &= -\sup\{t\;|\; X+tI\in\tilde{\Theta}\}
\end{align*}
is convex by (a), it follows that $\oF$ is concave.

(c): By ellipticity, $\partial\Theta$ is necessarily a hyperplane $\partial\Theta = \{X\;|\;\langle A,X\rangle= m\}$ for some nonzero positive semidefinite matrix $A$ and $m\in\mR$. Thus by \eqref{eq:bndacdo}, $\langle A,X-\oF(X)I\rangle= m$ for all $X$ and
\[\oF(X) = \frac{1}{\tr A}\tr(AX) - \frac{m}{\tr A}.\]

(d): Since $X-\oF(X)I\in\partial\Theta$, then also $cX-c\oF(X)I\in\partial\Theta$ when $\Theta$ is a cone. That is,
\[0 = \oF\big(cX - c\oF(X)I\big) = \oF(cX) - c\oF(X).\]

(e): This is immediate from (a) and (d).

(f): $\oF$ is rotationally invariant since
\begin{align*}
\left\{t\;|\; Q^\top XQ + tI\in\Theta\right\}
	&= \left\{t\;|\; Q^\top (X + tI)Q\in\Theta\right\}\\
	&= \left\{t\;|\; X + tI\in Q\Theta Q^\top \right\}\\
	&= \left\{t\;|\; X + tI\in\Theta\right\}.
\end{align*}
\end{proof}

A final lemma is needed before we can prove the necessity of the condition $\ac(\Theta^B)\subset\Theta_p$.

\begin{lemma}\label{lem:final}
Let $\Theta\subseteq S(n)$ be a proper negative elliptic set, and let $B\in\mR^{n\times n}$ be invertible. Then
$\Theta^B := B^\top\Theta B$ and $\ac(\Theta)$ are again proper negative elliptic sets. If $\Theta$ is rotationally invariant, then so is $\ac(\Theta)$.
\end{lemma}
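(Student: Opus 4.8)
The statement bundles three claims about the operations $\Theta\mapsto\Theta^B$ and $\Theta\mapsto\ac(\Theta)$, so I would dispatch them one at a time, each reducing to a short verification against the definitions ``$\emptyset\neq\Theta\neq S(n)$'' and ``$\Theta+S_-(n)=\Theta$''.

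First, the linear-change claim. The map $X\mapsto B^\top X B$ is a linear bijection of $S(n)$, so it sends the proper subset $\Theta$ to a proper subset $\Theta^B$ (neither empty nor all of $S(n)$). For ellipticity I would use the fact that $B^\top S_-(n) B = S_-(n)$: if $A\le 0$ then $B^\top A B\le 0$ since $y^\top B^\top A B y = (By)^\top A (By)\le 0$, and conversely $A = B^\top(B^{-\top}AB^{-1})B$ with $B^{-\top}AB^{-1}\le 0$. Hence $\Theta^B + S_-(n) = B^\top\Theta B + B^\top S_-(n) B = B^\top(\Theta+S_-(n))B = B^\top\Theta B = \Theta^B$.

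Second, the asymptotic-cone claim. That $\ac(\Theta)$ is nonempty follows because $\Theta$ is nonempty: pick any $X_0\in\Theta$; ellipticity gives $X_0 - kI\in\Theta$ for all $k\ge 0$, and $(X_0-kI)/k\to -I$, so $-I\in\ac(\Theta)$ (and $0\in\ac(\Theta)$ trivially, taking $X_k$ bounded). That $\ac(\Theta)\neq S(n)$ is the one point that needs a little care: I would argue that since $\Theta$ is proper and elliptic, it is contained in a shifted elliptic set avoiding some large positive multiple of $I$ — more precisely, if $mI\notin\Theta$ for some $m$ (which holds since $\Theta\neq S(n)$ and $\Theta + S_-(n)=\Theta$ forces $\Theta$ to miss all sufficiently large $mI$), then no sequence $X_k\in\Theta$ with $X_k/t_k\to I$ can exist, for $X_k/t_k\to I$ would force $X_k\ge (t_k/2)I$ eventually, hence $mI\le X_k$ and $mI\in\Theta$ by ellipticity, a contradiction for large $k$. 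So $I\notin\ac(\Theta)$. Ellipticity of $\ac(\Theta)$: if $Z\in\ac(\Theta)$ via $X_k,t_k$, and $W\le Z$, then $X_k + t_k(W-Z)\in\Theta$ by ellipticity (since $t_k(W-Z)\le 0$), and $(X_k + t_k(W-Z))/t_k\to Z + (W-Z) = W$, so $W\in\ac(\Theta)$.

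Third, rotational invariance passes to $\ac(\Theta)$: if $Z = \lim X_k/t_k$ with $X_k\in\Theta$, then for $Q\in O(n)$ we have $Q^\top X_k Q\in\Theta$ by rotational invariance of $\Theta$, and $Q^\top X_k Q/t_k\to Q^\top Z Q$, so $Q^\top Z Q\in\ac(\Theta)$.

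\textbf{Main obstacle.} Everything is routine except the assertion $\ac(\Theta)\neq S(n)$; the subtlety is that $\ac(\Theta)$ could a priori be large even when $\Theta$ is ``thin,'' so the argument must genuinely use that an elliptic proper set misses an entire ray $\{mI : m\ge m_0\}$ and translate that into the impossibility of $I$ (equivalently any positive definite matrix) lying in the asymptotic cone. I would make sure the claim ``$\Theta$ proper elliptic $\Rightarrow mI\notin\Theta$ for all large $m$'' is stated cleanly: if $mI\in\Theta$ for some $m$, then by ellipticity $\Theta\supseteq mI + S_-(n)$; but $S(n) = \bigcup_{m}(mI + S_-(n))$ since every $X$ satisfies $X\le mI$ for $m$ large, and together with $\Theta+S_-(n)=\Theta$ this would give $\Theta = S(n)$, contradicting properness. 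Hence no $mI$ lies in $\Theta$ at all, which is even stronger than needed and makes the contradiction above immediate.
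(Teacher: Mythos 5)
Your proof of the three claims follows essentially the same route as the paper: ellipticity of $\Theta^B$ by congruence, ellipticity of $\ac(\Theta)$ by adding $t_k(W-Z)\le 0$ to the defining sequence, nonemptiness of $\ac(\Theta)$ via $\Theta+S_-(n)=\Theta$, properness by showing a positive definite matrix cannot be a limit of $X_k/t_k$, and the routine conjugation argument for rotational invariance. The main body of your argument is correct.

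However, the ``even stronger'' remark at the very end is wrong. You assert that if $mI\in\Theta$ for some single $m$ then $\Theta=S(n)$, and conclude that ``no $mI$ lies in $\Theta$ at all.'' This is false: $\Theta=S_-(n)$ is a proper negative elliptic set containing $-I$ and $0$. Your error is in passing from $mI\in\Theta$ for \emph{one} $m$ (which only gives $\Theta\supseteq mI+S_-(n)$) to coverage by $\bigcup_m(mI+S_-(n))$, which would require $mI\in\Theta$ for \emph{all} $m$; the Minkowski identity $\Theta+S_-(n)=\Theta$ does not let you move upward along multiples of $I$. Fortunately, your actual argument earlier in the paragraph does not rely on this: you only use the correct fact that $\Theta$ misses all \emph{sufficiently large} $mI$, which follows because $\Theta\ne S(n)$ gives some $X_0\notin\Theta$, and then $m_0I\notin\Theta$ for any $m_0$ with $X_0\le m_0 I$ (else ellipticity would put $X_0$ in $\Theta$). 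So the proof stands; just delete or repair the concluding strengthening.
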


\begin{proof}
Let $X,Y\in S(n)$ with $X\leq Y$ and $Y\in\Theta^B$. Then $Y = B^\top\tilde{Y}B$ for some $\tilde{Y}\in\Theta$. Thus,
\[\tilde{X} := B^{-\top}X B^{-1} \leq B^{-\top}Y B^{-1} = \tilde{Y}\]
and $\tilde{X}\in \Theta$ since $\Theta$ is an elliptic set. It follows that $X = B^\top\tilde{X}B\in \Theta^B$. The properness is clear.

Assume now that $X\leq Y\in\ac(\Theta)$. Let $t_k\to\infty$ and $Y_k\in\Theta$ be such that $Y_k/t_k\to Y$. Since $\Theta$ is elliptic and $t_k(X-Y)\leq 0$, we have $Y_k + t_k(X-Y)\in\Theta$ for each $k$. Thus,
\[X = \lim_{k\to\infty}\frac{Y_k + t_k(X-Y)}{t_k}\in\ac(\Theta).\]

The asymptotic cone is nonempty. In fact, $S_-(n)\subseteq\ac(\Theta)$ because if $X\leq 0$ and $Y\in\Theta$, then $Y + kX\in\Theta$ for all $k = 1,2,\dots$ and
\[X = \lim_{k\to\infty}\frac{Y + kX}{k}\in\ac(\Theta).\]
On the other hand, $\inter S_+(n)\subseteq ( S(n)\setminus\ac(\Theta) )$. Because if $Z\in\ac(\Theta)$ and say, $Z\geq \epsilon I$, then there are $X_k\in\Theta$ and $t_k\to\infty$ so that $X_k/t_k \geq (\epsilon/2)I$ for all sufficiently large $k$. Thus, eventually
\[X_k \geq \frac{\epsilon t_k}{2}I \geq Y\]
for any $Y\notin\Theta$. A contradiction.

Finally, for $Z\in\ac(\Theta)$ and $Q\in O(n)$ let $t_k\to\infty$ and $X_k\in\Theta$ be sequences so that $X_k/t_k\to Z$. Since $\Theta$ is rotationally invariant, $Q^\top X_kQ\in\Theta$ for each $k$, and since $Q^\top X_kQ/t_k\to Q^\top ZQ$ it follows that $Q^\top ZQ\in\ac(\Theta)$. Therefore,
\[\rot\ac(\Theta) := \{Q^\top ZQ\;|\; Z\in\ac(\Theta),\,Q\in O(n)\} = \ac(\Theta).\]
\end{proof}

\begin{proof}[Proof of Theorem \ref{thm:nec}]

The asymptotic cone $\ac(\Theta^B) = \ac(B^\top \Theta B)$ is easily seen to be a closed cone in $S(n)$.
As $\Theta$ is assumed to be convex, $\Theta^B$ is convex and so is $\ac(\Theta^B)$.
See Section 2.1 and 2.2, and in particular, Proposition 2.1.5 in \cite{MR1931309}.
Moreover, we are there given the equivalent formulations
\begin{align*}
\ac(\Theta^B) &= \{Z\in S(n)\;|\; \text{$X+tZ\in\clos\Theta^B$ for all $X\in\Theta^B$ and all $t\geq 0$}\}\\
          &= \{Z\in S(n)\;|\; \text{$X+tZ\in\clos\Theta^B$ for some $X\in\Theta^B$ and all $t\geq 0$}\}.
\end{align*}
Here, $\clos\Theta^B$ denotes the closure of $\Theta^B$. For convex sets the above is also called the \emph{recession cone}.

The available alternative formula for $\ac(\Theta^B)$ implies $\ac(\Theta^B) + \{X\}\subseteq \clos\Theta^B$ for all $X\in\Theta^B$. In particular, since $\Theta^B$ is elliptic by Lemma \ref{lem:final} above and since $Y\in \clos\Theta^B\;\Rightarrow\; Y-\epsilon I\in \inter \Theta^B$ for all $\epsilon>0$ by Lemma 3.1 (2) in \cite{brustad2020comparison}, we can pick a large enough $m\in\mR$ so that
\begin{equation}
\ac(\Theta^B) - m\{I\}\subseteq \Theta^B.
\label{eq:incl}
\end{equation}

We now choose $B$ so that $\Theta^B$ is rotationally invariant and let $G$ denote the associated consistent distance operator to $\ac(\Theta^B)$. That is,
\[G(X) := -\sup\{t\;|\; X+tI\in\ac(\Theta^B)\}.\]
By Lemma \ref{lem:final} and the discussion above, $\ac(\Theta^B)$ is a proper and rotationally invariant elliptic convex cone. Therefore, by Proposition \ref{prop:Gprop} (e) and (f), $G\colon S(n)\to\mR$ is a rotationally invariant sublinear elliptic operator.
As $\ac(\Theta^B)$ is closed, we also note that
\begin{equation}
G(Z)\leq 0\qquad\iff\qquad Z\in\ac(\Theta^B).
\label{eq:G}
\end{equation}

Let $p\in[2,\infty]$. We assume the negation of \eqref{eq:nec} and aim to show that $F(\cH u) = 0$ has a supersolution not in $W^{1,q}_{loc}$ for some 
\[0<q< \frac{n}{n-1}(p-1),\]
or some $0<q<\infty$ in the case $p=\infty$.

Accordingly, as $\ac(\Theta^B)$ is \emph{not} a subset of $\Theta_p$, there is a $Z\in\ac(\Theta^B)$ with $F_p(Z) > 0$.
Let $g := p(G)\in[2,\infty]$ be the body cone aperture of $G$, and assume for the sake of contradiction that $g\geq p$.
Then $\Theta_{g}\subseteq \Theta_p$ so $F_{g}(Z)>0$, and by \eqref{eq:G} and Proposition \ref{prop:invhol},
\[0 < cF_{g}(Z) \leq G(Z) \leq 0.\]
Thus, $2\leq g< p$.
Also, for $x\neq 0$, $G(\cH w_{n,g}(x)) = 0$ by Proposition \ref{prop:exfundsol}, and $\cH w_{n,g}(x)\in \ac(\Theta^B)$, again by \eqref{eq:G}.
Let $x_0\in\Omega^B$ and let $m\in\mR$ be the constant from \eqref{eq:incl}. Define the lower semicontinuous function $v\colon\Omega^B\to(-\infty,\infty]$ as
\[v(x) := w_{n,g}(x-x_0) - \frac{m}{2}|x|^2.\]
This function is not $W^{1,q}_{loc}(\Omega^B)$ for
\[q := \frac{n}{n-1}(g - 1) < \frac{n}{n-1}(p - 1).\]
There are no test functions touching from below at $x_0$. In $\Omega^B\setminus\{x_0\}$, $v$ is smooth with Hessian matrix
\[\cH v(x) = \cH w_{n,g}(x-x_0) - mI\subseteq \ac(\Theta^B) - m\{I\}\subseteq \Theta^B\]
by \eqref{eq:incl}. The change of variables $u(x) := v\big(B^{-1}x\big)$ produces a supersolution $u\notin W^{1,q}_{loc}(\Omega)$ as
\[\cH u(x) = B^{-\top}\cH v\big(B^{-1}x\big)B^{-1}\subseteq \Theta\]
and thus $F\big(\cH u(x)\big)\leq 0$.
\end{proof}

There is probably some room for improvement in Theorem \ref{thm:nec}. In particular, it should be possible to relax the convexity assumption on $\Theta$.


\paragraph{Acknowledgments:}
The problem addressed in this paper was suggested to me by Professor P. Lindqvist.

The results in Section \ref{sec:risee} are copied from the unpublished part \cite{brustad2018sublinear} of my thesis.


\bibliographystyle{alpha}
\bibliography{C:/Users/Karl_K/Documents/PhD/references}

\end{document}